\newtheorem*{rep@theorem}{\rep@title}
\newcommand{\newreptheorem}[2]{%
	\newenvironment{rep#1}[1]{%
		\def\rep@title{#2 \ref{##1}} 
		\begin{rep@theorem}}%
		{\end{rep@theorem}}}
\newcommand{\Fe}{\mathbb{F}_e}
\newcommand{\Fer}{\mathbb{F}_{e,r}}
\theoremstyle{plain}
\numberwithin{equation}{section}
\newtheorem{theorem}{Theorem}[section]
\newtheorem{proposition}[theorem]{Proposition}
\newtheorem{lemma}[theorem]{Lemma}
\theoremstyle{definition}
\newtheorem{definition}[theorem]{Definition}
\newtheorem{remark}[theorem]{Remark}
\newtheorem{example}[theorem]{Example}
\numberwithin{equation}{section}
\title{Positivity of line bundles on general blow ups of Hirzebruch surfaces}
\author[Cyril J. Jacob]{Cyril J. Jacob}
\address{Chennai Mathematical Institute, H1 SIPCOT IT Park, Siruseri, Kelambakkam 603103, India}
\email{cyril@cmi.ac.in}
\author[Bivas Khan]{Bivas Khan}
\address{Chennai Mathematical Institute, H1 SIPCOT IT Park, Siruseri, Kelambakkam 603103, India}
\email{bivaskhan10@gmail.com}
\subjclass[2020]{14C20, 14E05, 14J26}
\keywords{Hirzebruch surfaces, Blow ups, Ampleness, Global generation, Very ampleness, $k$-very ampleness}
\date{\today}
\begin{document}
	
	\begin{abstract}
		We investigate various positivity properties of line bundles on general blow ups of Hirzebruch surfaces motivated by \cite{Han}, where the author has studied general blow ups of $\mathbb{P}^2$. For each of the properties: ampleness, global generation, very ampleness, and $k$-very ampleness, we provide several sufficient numerical conditions. 
	\end{abstract}
	
	\maketitle

	\section{Introduction}\label{Introduction}
	
	Positivity properties of line bundles on the blow up of projective varieties have been of significant interest in the literature. There are several notions of the positivity of a line bundle, such as ampleness, global generation, very ampleness, and more generally $k$-very ampleness. We recall the definition of $k$-very ampleness.
	\begin{definition}
		A line bundle $L$ on a projective variety $X$ is said to be $k$-very ample if the restriction map
		$$H^0(X,L)\to H^0(X,L\otimes\mathcal{O}_Z)$$
is surjective for all zero-dimensional subschemes $Z \subset X$ of length $k + 1$.
	\end{definition}
	Note that $0$-very ampleness and $1$-very ampleness of a line bundle are equivalent to the global generation and very ampleness, respectively. 
	
	Let $X$ be a surface and $p_1,p_2,\dots,p_r$ be $r$ distinct points in $X$. 
	Let $\pi: X_r \to X$ be the blow up of $r$ distinct points on $X$. 
	Then 
	$$\text{Pic }X_r=\pi^\ast \text{Pic }X\oplus \mathbb{Z}.E_1 \oplus \mathbb{Z}.E_2 \oplus \dots \oplus \mathbb{Z}.E_r,$$
	where $E_i=\pi^{-1}(\{p_i\})$ is the exceptional divisor corresponding to $p_i$.
	So, a line bundle $L$ on $X_r$ has the form $L=\pi^\ast(\mathcal{L})-m_1E_1-\dots - m_rE_r$, where $\mathcal{L}\in \text{Pic }X$. If $m_i=m$ for all $1\le i\le r$, we call $L$ a \textit{uniform line bundle}.

The ampleness of a line bundle can be characterized in different ways: geometrically (which we take as the definition), cohomologically (through the theorem of Cartan-Serre-Grothendieck), or numerically (using the Nakai-Moishezon-Kleiman criterion). Determining whether a given line bundle is ample is often challenging. Although the Nakai-Moishezon-Kleiman criterion provides a numerical condition, it involves an infinite number of conditions to check. So, it is natural to look for a finite number of sufficient conditions to check ampleness.

In this article, we restrict our interest to surfaces.
It is known that if the \textit{Nagata-Biran-Szemberg conjecture} (see \cite[Section 4.1]{Sze}) holds for a surface $X$, then for a uniform line bundle $L$ on the blow up of $X$ at a sufficiently large number of points is ample if and only if $L^2>0$ (\cite[Theorem 3.1]{HJNS1}). Several finite conditions have been proposed for surfaces to determine ampleness of a line bundle. The case of $ \mathbb{P}^2 $ has been extensively studied by various authors. Küchle (\cite{Ku}) and Xu (\cite{Xu}) independently showed that for the general blow up of $ \mathbb{P}^2$, the line bundle $L=\pi^\ast(\mathcal{O}_{\mathbb{P}^2}(d))-E_1-E_2-\dots-E_r$ is ample if $d\ge 3$. The conditions for global generation and very ampleness can be found in \cite{Dav}. A complete characterization for very ampleness is given in \cite[Proposition 3.2]{GGP} when $ m_i = 1 $ for all $ i $. Later, Hanumanthu developed numerical criteria for the ampleness, global generation, and $ k $-very ampleness of line bundles, when blown up points are either general (\cite{Han}) or special (\cite{Han1}).

This question has also been explored for various other surfaces in addition to $ \mathbb{P}^2 $. For example, on an abelian surface, Lee and Shin provided criteria for ampleness and $ k $-very ampleness of uniform line bundles (see \cite{Lee}). For a ruled surface, Szemberg and Gasinska provided a sufficient condition for a uniform line bundle with $ m = 1 $ to be $ k $-very ample when the blown-up points are general (\cite{ST}). In \cite{Far}, Farnik obtained similar results for $k$-very ampleness in the case of hyperelliptic surface.

In this article, we investigate the positivity of line bundles on the blow ups of Hirzebruch surfaces. Our primary motivation comes from \cite{Han}. 
In Section \ref{Ample}, we have obtained three sets of sufficient conditions for ampleness (see Theorem \ref{AmpleTh1}, Theorem \ref{AmpleTh2} and Theorem \ref{AmpTh3}). As an application, we have provided certain lower bounds for multi-point Seshadri constants of line bundles on the Hirzebruch surface. 

In Section \ref{Global Generation} and Section \ref{Very Ampleness}, we have obtained several sufficient conditions for global generation and very ampleness, respectively. The key tool here is the Reider's criterion (see Theorem \ref{Reider's}) along with the conditions for ampleness obtained in Section \ref{Ample}. In the final section, we have studied $k$-very ampleness. The key ingredient is a criterion by Beltrametti, Francia, and Sommese (see Theorem \ref{BFS}), along with the criteria for ampleness obtained in Section \ref{Ample}, as before.

Throughout this article, we work over the field $\mathbb{C}$ of complex numbers. 

	\subsection*{Acknowledgments} We sincerely thank Krishna Hanumanthu for suggesting the problem and for his valuable discussions and suggestions. We also thank Suhas B. N. for several helpful discussions. The authors are partially supported by a grant from the Infosys Foundation.






	\section{Preliminaries}
	Let $e\ge 0$ be an integer. Then the projective bundle associated with the rank two vector bundle $\mathcal{O}_{\mathbb{P}^1}\oplus\mathcal{O}_{\mathbb{P}^1}(-e)$ over $\mathbb{P}^1$,
	$$\Fe=\mathbb{P}(\mathcal{O}_{\mathbb{P}^1}\oplus\mathcal{O}_{\mathbb{P}^1}(-e))$$
	is called the \textit{Hirzebruch surface} with invariant $e$. Let $\phi: \Fe \to \mathbb{P}^1$ denote the canonical projection map. Let $C_e$ denote the image of a section of $\phi: \Fe \to \mathbb{P}^1$ such that $C_e$ is the
	divisor associated to the line bundle $\mathcal{O}_{\Fe}(1)$. Also, let $f$ denote a fiber of the map $\phi : \Fe \to \mathbb{P}^1$. Then we have
	$$\text{Pic } \Fe = \mathbb{Z}.C_e \oplus \mathbb{Z}.f,$$
	with  the following intersection products
    \begin{equation*}
         C_e^2=-e, \, f^2=0 \text{ and } C_e\cdot f=1.
    \end{equation*}

   We are interested in the blow up of $\Fe$ at finitely many distinct points. Let $\pi: \Fer \to \Fe$ denote the blow up of $\Fe$ at $r$ distinct points $p_1,p_2,\dots,p_r$. We know that
	$$\text{Pic } \Fer = \mathbb{Z}.\pi^\ast(C_e) \oplus \mathbb{Z}.\pi^\ast(f) \oplus \mathbb{Z}.E_1 \oplus \mathbb{Z}.E_2 \oplus \dots \oplus \mathbb{Z}.E_r,$$
		where $H_e = \pi^*(C_e)$, $F_e = \pi^*(f)$, and $E_i=\pi^{-1}(\{p_i\})$ is the exceptional divisor corresponding to $p_i$. Moreover, in both $\mathbb{F}_e$ and $\mathbb{F}_{e,r}$, the numerical equivalence coincides with the linear equivalence. Let $K_{\mathbb{F}_e}$ and $K_{\mathbb{F}_{e,r}}$ denote the canonical line bundles of $\mathbb{F}_e$ and
        $\mathbb{F}_{e,r}$, respectively. Then, we have $$ K_{\mathbb{F}_e} = -2 C_e - (e+2) f $$ and $$K_{\mathbb{F}_{e,r}} = -2 H_e - (e+2) F_e + \sum\limits_{i=1}^r E_i.$$
         By abuse of notation, we will use $H_e$ and $F_e$ to refer both the divisor classes and a specific curve linearly equivalent to these divisor classes. 	
	
	
	\begin{proposition}\cite[ Proposition 2.8]{JKS}\label{dim}
		Let $\phi:\Fe\to \mathbb{P}^1$ denote the Hirzebruch surface with invariant $e$.  Then we have
		$$h^0(\Fe,aC_e+bf)=(a+1)\left(b+1-\frac{ae}{2}\right),$$
		where $ a \geq 0 \text{ and }b \geq ae$.
	\end{proposition}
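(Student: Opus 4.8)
The plan is to push the line bundle forward along the ruling $\phi\colon\Fe\to\mathbb{P}^1$ and reduce everything to a cohomology computation on $\mathbb{P}^1$. Since $f$ is a fiber of $\phi$, we have $\mathcal{O}_{\Fe}(bf)\cong\phi^{*}\mathcal{O}_{\mathbb{P}^1}(b)$, so the projection formula gives
\[
\phi_{*}\mathcal{O}_{\Fe}(aC_e+bf)\;\cong\;\big(\phi_{*}\mathcal{O}_{\Fe}(aC_e)\big)\otimes\mathcal{O}_{\mathbb{P}^1}(b).
\]
By definition $\Fe=\mathbb{P}(\mathcal{O}_{\mathbb{P}^1}\oplus\mathcal{O}_{\mathbb{P}^1}(-e))$, and with the normalization that $C_e$ is the divisor class of $\mathcal{O}_{\Fe}(1)$ (the section with $C_e^2=-e$, $C_e\cdot f=1$), the standard formula for direct images on a projective bundle yields, for $a\ge 0$,
\[
\phi_{*}\mathcal{O}_{\Fe}(aC_e)\;\cong\;\mathrm{Sym}^{a}\big(\mathcal{O}_{\mathbb{P}^1}\oplus\mathcal{O}_{\mathbb{P}^1}(-e)\big)\;\cong\;\bigoplus_{i=0}^{a}\mathcal{O}_{\mathbb{P}^1}(-ie).
\]

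Next I would dispose of higher cohomology. Restricting $aC_e+bf$ to a fiber $f\cong\mathbb{P}^1$ gives a line bundle of degree $(aC_e+bf)\cdot f=a\ge 0$, so $H^1$ vanishes on every fiber; by cohomology and base change $R^1\phi_{*}\mathcal{O}_{\Fe}(aC_e+bf)=0$, and the Leray spectral sequence for $\phi$ collapses. Hence
\[
h^0(\Fe,aC_e+bf)\;=\;h^0\Big(\mathbb{P}^1,\bigoplus_{i=0}^{a}\mathcal{O}_{\mathbb{P}^1}(b-ie)\Big)\;=\;\sum_{i=0}^{a}h^0\big(\mathbb{P}^1,\mathcal{O}_{\mathbb{P}^1}(b-ie)\big).
\]
Now I would invoke the hypothesis $b\ge ae$: for $0\le i\le a$ one has $b-ie\ge b-ae\ge 0$, so each summand contributes exactly $b-ie+1$, and therefore
\[
h^0(\Fe,aC_e+bf)=\sum_{i=0}^{a}(b-ie+1)=(a+1)(b+1)-e\cdot\frac{a(a+1)}{2}=(a+1)\Big(b+1-\frac{ae}{2}\Big),
\]
which is the assertion.

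The only point requiring care — rather than a genuine obstacle — is the matching of conventions: one must be sure that under the stated intersection numbers $C_e$ is the section of \emph{negative} self-intersection, so that $\phi_{*}\mathcal{O}_{\Fe}(C_e)\cong\mathcal{O}_{\mathbb{P}^1}\oplus\mathcal{O}_{\mathbb{P}^1}(-e)$ and not a positive twist of it; once this is pinned down, the rest is the dimension count above. A self-contained variant, avoiding the projective-bundle pushforward formula, is induction on $a$ using the restriction sequence
\[
0\to\mathcal{O}_{\Fe}\big((a-1)C_e+bf\big)\to\mathcal{O}_{\Fe}(aC_e+bf)\to\mathcal{O}_{C_e}(aC_e+bf)\to 0,
\]
where $\mathcal{O}_{C_e}(aC_e+bf)\cong\mathcal{O}_{\mathbb{P}^1}(b-ae)$ has nonnegative degree and $H^1(\Fe,(a-1)C_e+bf)=0$ in the relevant range (again by the fiberwise argument), so the sequence stays exact on $H^0$; this gives the recursion $h^0(aC_e+bf)=h^0((a-1)C_e+bf)+(b-ae+1)$ with base case $h^0(\Fe,bf)=h^0(\mathbb{P}^1,\mathcal{O}_{\mathbb{P}^1}(b))=b+1$.
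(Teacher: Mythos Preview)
Your argument is correct. The pushforward computation via the projection formula and the identification $\phi_{*}\mathcal{O}_{\Fe}(aC_e)\cong\mathrm{Sym}^{a}(\mathcal{O}_{\mathbb{P}^1}\oplus\mathcal{O}_{\mathbb{P}^1}(-e))$ is the standard route, and the hypothesis $b\ge ae$ is invoked at exactly the right place to ensure each summand $\mathcal{O}_{\mathbb{P}^1}(b-ie)$ has nonnegative degree. One minor remark: the vanishing of $R^1\phi_{*}$ is not actually needed for the $h^0$ computation, since $H^0(\Fe,\mathcal{F})=H^0(\mathbb{P}^1,\phi_{*}\mathcal{F})$ holds for any sheaf and any morphism, by the very definition of pushforward; the Leray argument would only be required if you were also tracking $h^1$ or $h^2$.

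As for comparison with the paper: the paper does not prove this proposition at all. It is quoted from \cite[Proposition~2.8]{JKS} and used as a black box (for instance in Lemma~\ref{multi} and in the case analysis of Theorem~\ref{AmpleTh2}). So there is nothing to compare your method against; you have supplied a complete and standard proof where the paper simply cites one.
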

		

The Nakai-Moishezon criterion provides a numerical criterion for determining the ampleness of a line bundle. Analogously, the following two theorems establish sufficient numerical conditions for global generation, very ampleness, and $k$-very ampleness.
	\begin{theorem}[Reider's Criterion]\cite[Theorem 1]{Re}\label{Reider's}
		Let $X$ be a smooth surface and $N$ be a line bundle on $X$. If $N^2\ge 5$ and $K+N$ is not globally generated, then there exists an effective divisor $D$ such that any of the following holds:
		\begin{enumerate}
			\item $N\cdot D=0$ and $D^2=-1$,
			\item $N\cdot D=1$ and $D^2=0$.
		\end{enumerate}
		
		Also, if $N^2\ge 10$ and $K+N$ is not very ample, then there exists an effective divisor $D$ such that any of the following holds:
		\begin{enumerate}
			\item $N\cdot D=0$ and $D^2=-1$ or $-2$,
			\item $N\cdot D=1$ and $D^2=0$ or $-1$,
			\item $N\cdot D=2$ and $D^2=0$.
		\end{enumerate}
	\end{theorem}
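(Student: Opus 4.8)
The plan is to run Reider's original argument through rank-two vector bundles; throughout I use the hypothesis — present in \cite{Re}, and needed for the statement to hold — that $N$ is nef. Suppose first that $K+N$ is not globally generated, say at a point $x$. From $0\to \mathcal{I}_x\otimes(K+N)\to \mathcal{O}_X(K+N)\to \mathbb{C}_x\to 0$ one reads off $H^1(X,\mathcal{I}_x\otimes(K+N))\neq 0$, and Serre duality on the surface identifies this with $\mathrm{Ext}^1(\mathcal{I}_x\otimes N,\mathcal{O}_X)\neq 0$. Since the Cayley--Bacharach condition is vacuous for a length-one scheme, the Serre construction produces a non-split extension
\[
0\to \mathcal{O}_X\to \mathcal{E}\to \mathcal{I}_x\otimes N\to 0
\]
with $\mathcal{E}$ locally free of rank two, $c_1(\mathcal{E})=N$ and $c_2(\mathcal{E})=1$. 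For the very ample assertion I would proceed the same way: if $K+N$ fails to be globally generated the divisor from the previous step already appears in the list (case (1) with $D^2=-1$, or case (2)), so one may assume $K+N$ globally generated; if $K+N$ then fails to separate some length-two subscheme $Z$ (a pair of distinct points, or a tangent vector), global generation forces the Cayley--Bacharach property of $Z$ with respect to $K+N$, and the Serre construction gives $0\to \mathcal{O}_X\to \mathcal{E}\to \mathcal{I}_Z\otimes N\to 0$ with $c_1(\mathcal{E})=N$, $c_2(\mathcal{E})=2$.

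Set $\ell=c_2(\mathcal{E})\in\{1,2\}$. The hypothesis $N^2\geq 5$ (resp.\ $N^2\geq 10$) gives $c_1(\mathcal{E})^2-4c_2(\mathcal{E})=N^2-4\ell>0$, so by Bogomolov's instability theorem $\mathcal{E}$ has a saturated destabilizing sub-line bundle $\mathcal{O}_X(M)$ with torsion-free quotient $\mathcal{I}_W\otimes\mathcal{O}_X(N-M)$ ($W$ zero-dimensional) such that $2M-N$ lies in the positive cone. Put $D=N-M$. I would then check: (i) $D$ is linearly equivalent to an effective divisor containing $Z$, because the composite $\mathcal{O}_X(M)\to\mathcal{E}\to\mathcal{I}_Z\otimes N$ must be nonzero — otherwise $\mathcal{O}_X(M)\hookrightarrow\mathcal{O}_X$, forcing $-M$ effective, which contradicts positivity of $2M-N$ together with nefness of $N$ — and twisting this nonzero composite by $-M$ yields a section of $\mathcal{I}_Z\otimes\mathcal{O}_X(D)$; (ii) from $c_2(\mathcal{E})=M\cdot D+\operatorname{length}(W)$ one gets $N\cdot D-D^2=M\cdot D\leq\ell$; and (iii) $D\neq 0$ (it contains $Z$), so $N\cdot D\geq 0$ by nefness, while $N-2D=2M-N$ being in the positive cone and $N$ nef give $(N-2D)\cdot N\geq 0$, i.e.\ $N\cdot D\leq N^2/2$.

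The endgame is a Hodge index computation. Writing $t=N\cdot D\geq 0$, we have $D^2\geq t-\ell$ and $N^2\cdot D^2\leq t^2$ (Hodge index, since $N^2>0$). When $t\leq\ell$, these inequalities together with $D\neq 0$ and $N^2\geq 5$ (resp.\ $\geq 10$) force $(t,D^2)$ to be exactly one of the pairs listed in the statement; in particular $t=0$ forces $D^2<0$ by strictness of the Hodge index theorem. When $t\geq\ell+1$ we have $D^2\geq t-\ell\geq 1$, so $N^2(t-\ell)\leq N^2 D^2\leq t^2$ gives $N^2\leq t^2/(t-\ell)$, which combined with $N^2\geq 2t$ yields $t\leq 2\ell$; the finitely many remaining values $t\in\{\ell+1,\dots,2\ell\}$ are then eliminated directly against $N^2\geq 5$ (resp.\ $\geq 10$). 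This exhausts all possibilities and produces the desired effective divisor $D$.

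I expect the two pressure points to be the places where the vector-bundle machinery meets the geometry: first, verifying the Cayley--Bacharach hypothesis and that the sheaf furnished by the Serre construction is genuinely locally free, especially for a non-reduced length-two subscheme $Z$ in the very ample case; and second, converting \emph{``$\mathcal{E}$ is Bogomolov-unstable''} into an honest effective divisor through $Z$ with the claimed intersection numbers, i.e.\ step (i) above. Once those are in place, the Hodge index bookkeeping in the last paragraph is routine.
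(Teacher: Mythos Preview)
The paper does not prove this theorem; it is quoted verbatim as \cite[Theorem 1]{Re} and used as a black box throughout Sections \ref{Global Generation} and \ref{Very Ampleness}. So there is no ``paper's own proof'' to compare against.

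Your sketch is the standard Reider argument and is essentially correct. You are right to flag that the nefness of $N$ is needed and is implicit in the paper's statement (Reider's original theorem assumes it, and the paper only ever applies the criterion after first checking $N$ is ample). The Serre construction, Bogomolov instability, the effectivity of $D=N-M$ through $Z$, and the Hodge-index endgame are all as in \cite{Re}; your numerical elimination in the last paragraph checks out (for $\ell=2$, $t=3$ gives $N^2\le 9$ and $t=4$ gives $N^2\le 8$, both killed by $N^2\ge 10$). The two ``pressure points'' you identify are exactly where the work lies in Reider's paper, and your outline of how to handle them is accurate.
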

    
The following theorem by Beltrametti, Francia, and Sommese, generalizes Reider's Criterion by extending it to $k$-very ample for all $k \geq 0$.
	\begin{theorem}[Beltrametti-Francia-Sommese]\label{BFS}\cite[Theorem 2.1]{Bfs}
		Let $X$ be a smooth surface and let $N$ be a nef line bundle with $N^2\ge 4k+5$. Then $K+N$ is not $k$-very ample if there exists an effective divisor $D$ on $X$ such that 
		\[
		N\cdot D-k-1\le D^2 \le \frac{N\cdot D}{2}<k+1.
		\]
	\end{theorem}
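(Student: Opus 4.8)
The plan is to produce, directly, a zero-dimensional subscheme $Z\subset X$ of length $k+1$ at which the $k$-th order separation of $K_X+N$ fails. First I would reformulate $k$-very ampleness cohomologically. Since $N^2\ge 4k+5>0$ and $N$ is nef, $N$ is nef and big, so Kawamata--Viehweg vanishing gives $H^1(X,K_X+N)=0$; feeding this into the long exact sequence of $0\to \mathcal I_Z(K_X+N)\to K_X+N\to \mathcal O_Z\otimes(K_X+N)\to 0$ shows that the restriction map $H^0(X,K_X+N)\to H^0(Z,\mathcal O_Z\otimes(K_X+N))$ is surjective if and only if $H^1(X,\mathcal I_Z(K_X+N))=0$. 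Hence it suffices to exhibit one length-$(k+1)$ subscheme $Z$ with $H^1(X,\mathcal I_Z(K_X+N))\neq 0$.

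Next I would look for $Z$ inside the given divisor $D$ and transport the computation onto $D$. Twisting $0\to\mathcal O_X(-D)\to\mathcal I_Z\to\mathcal I_{Z/D}\to 0$ by $K_X+N$ gives
\[
0\longrightarrow K_X+N-D\longrightarrow \mathcal I_Z(K_X+N)\longrightarrow \mathcal I_{Z/D}\bigl((K_X+N)|_D\bigr)\longrightarrow 0 .
\]
By Serre duality $H^2(X,K_X+N-D)\cong H^0(X,D-N)^{\vee}$, and $D-N$ is not effective because $(D-N)\cdot N=D\cdot N-N^2\le (2k+1)-(4k+5)<0$ while $N$ is nef (the bound $N\cdot D\le 2k+1$ coming from $\tfrac12 N\cdot D<k+1$). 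Thus $H^2(X,K_X+N-D)=0$, and the long exact sequence shows $H^1(X,\mathcal I_Z(K_X+N))$ surjects onto $H^1\bigl(D,\mathcal I_{Z/D}((K_X+N)|_D)\bigr)$. It therefore suffices to choose $Z\subset D$ of length $k+1$ so that $H^1\bigl(D,\mathcal I_{Z/D}((K_X+N)|_D)\bigr)\neq 0$.

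Now the problem lives on the curve $D$. After replacing $D$ by a minimal effective divisor still satisfying the hypotheses (which one checks reduces to the case where $D$ is reduced and connected, so that $D$ is an integral Gorenstein curve with dualizing sheaf $\omega_D=(K_X+D)|_D$ by adjunction), I would take $Z$ to be a Cartier divisor of degree $k+1$ supported on the smooth locus of $D$. Then $\mathcal I_{Z/D}((K_X+N)|_D)=(K_X+N)|_D(-Z)$ is a line bundle and Serre duality on $D$ gives
\[
H^1\bigl(D,(K_X+N)|_D(-Z)\bigr)\;\cong\;H^0\bigl(D,(D-N)|_D+Z\bigr)^{\vee}.
\]
The degree of $(D-N)|_D+Z$ equals $D^2-N\cdot D+(k+1)$, which is $\ge 0$ precisely by the hypothesis $D^2\ge N\cdot D-k-1$; the complementary hypothesis $D^2\le\tfrac12 N\cdot D$ pins $(D-N)|_D$ to strictly negative degree, so the task is to distribute the $k+1$ points of $Z$ on $D$ so that $(D-N)|_D+Z$ becomes effective. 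I would do this by absorbing into $Z$ an effective representative of the appropriate residual class on $D$ (equivalently, the base divisor forced by Riemann--Roch), the degree count $D^2-N\cdot D+(k+1)\le k+1$ guaranteeing that $k+1$ points suffice. This gives $h^0\bigl(D,(D-N)|_D+Z\bigr)>0$, hence $H^1(X,\mathcal I_Z(K_X+N))\neq 0$, completing the argument.

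The step I expect to be the main obstacle is this last one, together with the reduction to an integral curve $D$. When $D$ is reducible or non-reduced, a line bundle of non-negative total degree on $D$ need not be effective, and the Cartier/smooth-point requirement on $Z$ must be handled component by component; moreover, when the arithmetic genus of $D$ is large, a naive ``general points'' choice of $Z$ need not make $(D-N)|_D+Z$ effective, so one is forced to move the points of $Z$ along a suitable pencil and use the full strength of \emph{both} numerical inequalities. It is exactly here that the quantitative hypothesis $N^2\ge 4k+5$, rather than merely $N^2>0$, enters: it bounds $N\cdot D$ and $D^2$ and thereby constrains the geometry (in particular the possible genus) of the auxiliary curve $D$.
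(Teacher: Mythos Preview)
The paper does not prove this theorem at all: it is quoted verbatim from \cite[Theorem~2.1]{Bfs} and used as a black box in Section~6. So there is no ``paper's own proof'' to compare your proposal against.

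More importantly, the implication you are trying to prove is not the one the paper needs, and it is not the Beltrametti--Francia--Sommese theorem. As printed, the statement reads ``$K+N$ is not $k$-very ample \emph{if} there exists \ldots $D$\ldots'', i.e.\ (existence of $D$) $\Rightarrow$ (failure of $k$-very ampleness). But look at how the result is invoked in the proofs of Theorems~\ref{kveryThm1} and its successors: ``Suppose that $L$ is not $k$-very ample. Then by Theorem~\ref{BFS}, there exists an effective divisor $D$ such that\ldots''. That is the \emph{converse} implication, and it is the content of \cite[Theorem~2.1]{Bfs}. The word ``if'' in the displayed statement is a slip for ``only if''; the authors use the theorem correctly even though they have typeset the arrow backwards.

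Your proposal, read literally, attacks the converse direction, and the gap you yourself flag is real. On an integral curve $D$ of arithmetic genus $g$, a line bundle of non-negative degree need not have a section, so the step ``distribute the $k+1$ points of $Z$ so that $(D-N)|_D+Z$ becomes effective'' is not automatic: when $g>k+1$ there is no reason the degree-$(k+1)$ line bundle $(N-D)|_D$ lies in the image of the Abel--Jacobi map, and nothing in the hypotheses bounds $g$ (the inequality $N^2\ge 4k+5$ constrains $N\cdot D$ and $D^2$, but $p_a(D)$ also involves $K_X\cdot D$, which is unconstrained). The reduction to $D$ integral is likewise not justified. So even as a proof of the converse, the argument is incomplete; and in any case it is not the statement the paper relies on.
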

	\section{Ampleness}\label{Ample}
	In this section, we establish some criteria for determining the ampleness of a given line bundle. Recently, in \cite[Proposition 3.11]{HJNS}, the authors have obtained a necessary and sufficient condition for ampleness when the number of blown up points is less than or equal to $e+1$. Here, we provide several sufficient conditions for ampleness independent of the number of blown up points.
	\begin{theorem}\label{AmpleTh1}
		Suppose $e>0$. Let $p_1,p_2,\dots,p_r$ be $r$ distinct points on $\Fe$ such that for each $i$, $p_i \notin C_e$, and for $i, j \in \{1, \ldots , r\}$ with $i \neq j$, $p_i$ and $p_j$ are not on the same fiber of the map $\phi : \Fe \to \mathbb{P}^1$. Let $\pi : \Fer \to \Fe$ be the blow up of $\Fe$ at $p_1,p_2,\dots,p_r$. Let $L=aH_e+bF_e-m_1E_1-m_2E_2-\dots-m_rE_r$ be a line bundle on $\Fer$ such that:
		\begin{enumerate}
			\item \label{A1C1} $a>m_i>0$ ~$\text{for } 1\le i\le r,$
			\item \label{A1C2} $b>ae,$
			\item \label{A1C3} $b>\displaystyle \sum_{i=1}^{r}m_i.$
		\end{enumerate}
		Then $L$ is ample.
		\begin{proof}
				Using Nakai-Moishezon criteria for ampleness, it is enough to show that $L^2>0$ and $L\cdot C>0$ for all reduced irreducible curves on $\Fer$. First, we will show that $L^2>0$. Note that from $(1)$ and $(3)$, we have:
                \begin{equation}\label{Eqnmini-1}
                    ab>\sum_{i=1}^{r}m_i^2.
                \end{equation}
			Then, we have
			\begin{equation*}
				\begin{split}
					L^2&=2ab-a^2e-\sum_{i=1}^{r}m_i^2=ab+a(b-ae)-\sum_{i=1}^{r}m_i^2\\
							& >0 \, (\text{by $\eqref{Eqnmini-1}$, \eqref{A1C1} and \eqref{A1C2}}).
				\end{split}
			\end{equation*} 
			Now, we show that $L\cdot C>0$ for any reduced irreducible curve $C$ on $\Fer$.
			Write $$C=\alpha H_e+\beta F_e-n_1E_1-n_2E_2-\dots-n_rE_r.$$
            Now if $\alpha=0$ or $\beta=0$, from \cite[Corollary V.2.18]{Har} we can see that $C$ is a curve in the set $\{ E_i,H_e,F_e,F_e-E_i\}$ for some $i=1, \ldots,  r$. Hence, from  \eqref{A1C1} and  \eqref{A1C2}, we can see $L\cdot C>0$.
			
			Now suppose that $\alpha \neq 0, \beta \neq 0$. Then again from \cite[Corollary V.2.18]{Har}, we have $\beta\ge \alpha e$. As $C$ and $F_e-E_i$ are distinct curves, using Bezo\'ut's theorem we have
            $$\alpha \ge n_i \text{ for all } i=1, \ldots, r.$$
            Hence, by \eqref{A1C3} we have, 
			\begin{equation}\label{Eqnmini}
				\alpha b > \sum_{i=1}^rm_in_i.
			\end{equation}
			Thus, we get
			\begin{equation*}
				\begin{split}
					L\cdot C&= a\beta+\alpha b -a\alpha e -\sum_{i=1}^{r}m_in_i = a(\beta-\alpha e)+(\alpha b -\sum_{i=1}^r m_in_i)>0 ~(\text{by  \eqref{Eqnmini}}).
				\end{split}
			\end{equation*}
			Hence $L$ is ample.
		\end{proof}
		
	\end{theorem}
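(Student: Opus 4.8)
The plan is to verify the two conditions of the Nakai--Moishezon--Kleiman criterion: that $L^2 > 0$ and that $L \cdot C > 0$ for every reduced irreducible curve $C$ on $\Fer$. For the self-intersection, I would first extract from hypotheses \eqref{A1C1} and \eqref{A1C3} the auxiliary inequality $ab > \sum_i m_i^2$; indeed, $\sum_i m_i^2 < a \sum_i m_i < ab$ using $m_i < a$ and $\sum_i m_i < b$. Then writing $L^2 = 2ab - a^2 e - \sum_i m_i^2 = ab + a(b - ae) - \sum_i m_i^2$, each group of terms is positive (the first by the auxiliary inequality, the second by \eqref{A1C2}), so $L^2 > 0$.

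For the curve condition, I would write an arbitrary reduced irreducible curve as $C = \alpha H_e + \beta F_e - \sum_i n_i E_i$ and split into cases according to whether $\alpha$ or $\beta$ vanishes. When $\alpha = 0$ or $\beta = 0$, a structural classification of irreducible curves on the blow-up of a Hirzebruch surface (the projection-formula/adjunction bookkeeping analogous to \cite[Corollary V.2.18]{Har}) should pin $C$ down to one of $E_i$, $H_e$, $F_e$, or $F_e - E_i$; in each of these finitely many cases $L \cdot C > 0$ follows directly from \eqref{A1C1} and \eqref{A1C2} together with $a > m_i > 0$. When both $\alpha, \beta \neq 0$, I would first obtain $\beta \ge \alpha e$ from the same classification, then use that $C$ is distinct from each $F_e - E_i$ and apply Bézout (intersection of distinct irreducible curves is nonnegative, and $C \cdot (F_e - E_i) = \alpha - n_i$) to conclude $\alpha \ge n_i$ for all $i$. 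This yields $\sum_i m_i n_i \le a \sum_i n_i$... more precisely $\sum_i m_i n_i < \alpha \sum_i m_i < \alpha b$ using $m_i < a$... wait, one must be careful: we want $\alpha b > \sum_i m_i n_i$, which follows from $\sum_i m_i n_i \le \alpha \sum_i m_i < \alpha b$ using $n_i \le \alpha$ and $\sum_i m_i < b$. Then $L \cdot C = a\beta + \alpha b - a\alpha e - \sum_i m_i n_i = a(\beta - \alpha e) + (\alpha b - \sum_i m_i n_i) > 0$.

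The main obstacle, and the step requiring the most care, is the structural classification of reduced irreducible curves on $\Fer$ — specifically justifying that $\alpha = 0$ or $\beta = 0$ forces $C$ into the short list $\{E_i, H_e, F_e, F_e - E_i\}$, and that $\alpha, \beta \neq 0$ forces $\beta \ge \alpha e$. This is where one leans on the geometry of the ruling $\phi : \Fe \to \mathbb{P}^1$ and the hypothesis that the $p_i$ are not on $C_e$ and lie on distinct fibers: the negative section $C_e$ and the fibers $f$ are the only irreducible curves on $\Fe$ with nonpositive self-intersection behavior, and their strict transforms (or the exceptional curves $E_i$ and ruling components $F_e - E_i$) exhaust the curves one must handle separately. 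Once that classification is in hand, every remaining inequality is an elementary consequence of \eqref{A1C1}--\eqref{A1C3} and Bézout's theorem applied to pairs of distinct irreducible curves.
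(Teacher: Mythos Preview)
Your proposal is correct and follows essentially the same argument as the paper's proof: Nakai--Moishezon, the auxiliary inequality $ab > \sum_i m_i^2$ for $L^2>0$, the classification of irreducible curves with $\alpha=0$ or $\beta=0$ via \cite[Corollary V.2.18]{Har}, and in the remaining case the B\'ezout argument against $F_e - E_i$ to obtain $\alpha \ge n_i$ and hence $\alpha b > \sum_i m_i n_i$. The only difference is expository.
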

	
	In \cite[Lemma 3.2]{HJNS}, the authors have obtained a bound for the sum of multiplicity of an irreducible and reduced curve at $r$ distinct points when $r\le e+1$. The following lemma provides a bound for any $r\in \mathbb{N}$.

	\begin{lemma}\label{multi}
	 Let $n\ge e$ be an integer. Let $C=\alpha C_e+\beta f$ be a reduced and irreducible curve on $\Fe$ with $\alpha>1$ or $\beta >n$. Then for any distinct points $p_1,p_2,\dots,p_{2n-e+1}$ in $\Fe$, we have
	$$\beta +(n-e)\alpha \ge \sum_{i=1}^{2n-e+1}\text{mult}_{p_i}C.$$
	\begin{proof}
		We have $h^0(C_e+nf)=2n-e+2$ by Proposition \ref{dim}. So there exists a curve $D\in |C_e+nf|$ passing through points $p_1,p_2,\dots,p_{2n-e+1}$.  Since $C$ is reduced and irreducible, $C$ and $D$ can have a common component only if $C$ is a component of $D$. But
		$D-C$ can not be effective as $D-C\equiv (1-\alpha)C_e+(n-\beta)f$ and either $1-\alpha$ or $n-\beta$ is negative by the assumption on $C$.  Hence, $C$ is not a component of $D$.

		So, by Bez\'out's theorem, we have
		$$C\cdot D\ge \sum_{i=1}^{2n-e+1}\text{mult}_{p_i}C~\text{mult}_{p_i}D\ge \sum_{i=1}^{2n-e+1}\text{mult}_{p_i}C.$$
	\end{proof}
	\end{lemma}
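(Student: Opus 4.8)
The plan is to use the classical ``auxiliary curve plus B\'ezout'' argument. Consider the divisor class $H:=C_e+nf$ on $\Fe$; it is singled out by the computation
\[
C\cdot H=(\alpha C_e+\beta f)\cdot(C_e+nf)=-\alpha e+\alpha n+\beta=\beta+(n-e)\alpha ,
\]
which is exactly the quantity we must bound from below. The goal is to exhibit an effective divisor $D$ in the class $H$ that passes through all $2n-e+1$ of the points $p_i$ and has no component in common with $C$. Granting this, B\'ezout's theorem gives $C\cdot D=\sum_{p}I_p(C,D)\ge\sum_{i=1}^{2n-e+1}\text{mult}_{p_i}(C)\,\text{mult}_{p_i}(D)\ge\sum_{i=1}^{2n-e+1}\text{mult}_{p_i}(C)$, the last step because $D$ passes through each $p_i$ and so $\text{mult}_{p_i}(D)\ge1$; since $C\cdot D=C\cdot H=\beta+(n-e)\alpha$, this is the claimed inequality.

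To construct $D$: because $n\ge e=1\cdot e$, Proposition \ref{dim} applies with $a=1$, $b=n$ and yields $h^0(\Fe,C_e+nf)=2\,(n+1-\tfrac{e}{2})=2n-e+2$. Imposing vanishing at the $2n-e+1$ given points cuts out a subspace of global sections of dimension at least $(2n-e+2)-(2n-e+1)=1$, so a nonzero such section exists and its zero divisor is an effective $D\in|C_e+nf|$ through $p_1,\dots,p_{2n-e+1}$. The substantive point is that $C$ is not a component of $D$: since $C$ is reduced and irreducible, a common component of $C$ and $D$ would be $C$ itself, hence it is enough to check that $D-C$ is not effective. Now $D-C\equiv(1-\alpha)C_e+(n-\beta)f$, and I would invoke the fact --- valid uniformly for all $e\ge0$ --- that the effective cone of $\Fe$ is spanned by the classes of $C_e$ and $f$, so that a class $xC_e+yf$ can be effective only when $x\ge0$ and $y\ge0$. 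The hypothesis ``$\alpha>1$ or $\beta>n$'' says precisely that $1-\alpha<0$ or $n-\beta<0$, which rules this out; hence $C$ and $D$ share no component.

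I expect the only delicate point to be this last one: matching the numerical hypothesis on $(\alpha,\beta)$ against exactly what is needed to keep $C$ from being absorbed by the auxiliary curve. If $C$ were $C_e$ itself then $C_e\subset D$ and the argument would fail, and the hypothesis is tailored to exclude precisely the classes (essentially $C_e$, and also $f$ when $n$ is small) for which $D-C$ could be effective. A secondary, bookkeeping-level point is to state the effective-cone description of $\Fe$ in a form that also covers $e=0$, where $C_e$ is a nef fiber class of self-intersection $0$ rather than a negative curve; the conclusion $x,y\ge0$ is unaffected. Everything else is a routine verification.
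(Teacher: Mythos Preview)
Your argument is correct and is essentially the paper's own proof: choose an auxiliary $D\in|C_e+nf|$ through the $2n-e+1$ points (using the dimension count from Proposition~\ref{dim}), observe that $D-C\equiv(1-\alpha)C_e+(n-\beta)f$ cannot be effective under the hypothesis so $C$ and $D$ share no component, and conclude via B\'ezout. You spell out a few steps more explicitly (the intersection $C\cdot H=\beta+(n-e)\alpha$, the linear-algebra existence of $D$, and the effective-cone description of $\Fe$), but the strategy is identical.
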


	We formulate the following sufficient conditions for ampleness.
	\begin{theorem}\label{AmpleTh2}
		Suppose $e>0$. Let $\pi : \Fer \to \Fe$ be the blow up of $\Fe$ at very general points $p_1,p_2,\dots,p_r$. Let $\lambda\ge e$ be the least integer such that $2\lambda -e+2>r$ and  let $k_i=2i-e+1$ for all $e\le i \le \lambda$. Let $L=aH_e+bF_e-m_1E_1-m_2E_2-\dots-m_rE_r$ be a line bundle on $\Fer$ such that:
		\begin{enumerate}
			\item \label{A2C1} $a>m_i>0$ ~$\text{for } 1\le i\le r,$
			\item \label{A2C2}  $b+(i-e)a>\displaystyle \sum_{\substack{\text{any distinct }\\k_i \text{ of } m_j}}m_j$ $\text{for } e\le i\le \lambda,$
			\item \label{A2C3}  $b > a\lambda.$
		\end{enumerate}
		Then $L$ is ample.
		\begin{proof}
The strategy of the proof is the same as in the previous one, using the Nakai-Moishezon criterion. 
			Note that $\eqref{A2C1}$ and $\eqref{A2C2}$ together imply:
			\begin{equation}\label{EqnL2}
				a(b+(i-e)a)> \sum_{i=1}^{k_i}m_i^2 ~\text{for } e\le i\le \lambda.
			\end{equation}
			
			Then,
			\begin{equation*}
				\begin{split}
					L^2&=2ab-a^2e-\sum_{i=1}^rm_i^2\\
					&=a(b-a\lambda)+a(b+a(\lambda-e))-\sum_{i=1}^rm_i^2\\
					&> 0 ~~(\text{by } \eqref{A2C1}, \eqref{A2C3} \text{ and } \eqref{EqnL2}).
				\end{split}
			\end{equation*}
			Now, let $C=\alpha H_e+\beta F_e-n_1E_1-n_2E_2-\dots-n_rE_r$ be a reduced irreducible curve in $\Fer$. It remains to prove $L\cdot C>0$. 
			We consider the following cases using \cite[Corollary V.2.18]{Har}:\\
            
			\noindent\textbf{Case (i): $\alpha=\beta =0$}\\
			 Since $C$ is a reduced and irreducible curve, we have $C=E_i$ for some $1\le i \le r$. Thus $L\cdot C=m_i>0$ (by \eqref{A2C1}).\\
			 
			 \noindent\textbf{Case (ii): $\alpha=1, \beta =0$}\\
			 As $p_1,p_2,\dots,p_r$ are very general points and $e>0$, we know that $C_e$ passes through none of these points. Hence in this case $C=H_e$. So, $L\cdot H_e=b-ae \geq b-a \lambda> 0$ \text{ using \eqref{A2C3}}.\\
			 
			 \noindent\textbf{Case (iii): $\alpha=0, \beta =1$}\\
			 In this case, $C$ is the strict transform of some fiber $f$ in $\Fe$. As the blown up points are very general at most one point lies in a single fiber. Hence the curve $C$ is either $F_e$ or $F_e-E_i$ for some $i$. In both cases, we have $L\cdot C>0$ by \eqref{A2C1}.\\
			 
			 \noindent\textbf{Case (iv): $\alpha=1, e\le \beta \le \lambda $}\\
			 Note that $n_i=\text{mult}_{p_i}\pi(C)$ and using Bez\'out's theorem we can see that $n_i\le \alpha=1$. By Proposition \ref{dim}, we have $h^0(C_e+\beta f)=2\beta-e+2$. Since $p_1,p_2,\dots,p_r$ are very general points, at most $2\beta -e+1$ points of $p_1,p_2,\dots,p_r$ can lie on $\pi(C)$. i.e., at most $k_{\beta}=(2\beta -e+1)$ of $n_i$ are 1 and all others are zero. Hence by \eqref{A2C2} and the fact $n_i=1$ for at most $2\beta-e+1$ number of $i$, we have 
			 \begin{equation}\label{Eqn2}
			 	\alpha b+\alpha(\beta-e)a= b+(\beta-e)a>\sum_{i=1}^rm_in_i.
			 \end{equation}
			Now we have,
			\[\begin{split}
				L\cdot C&= a\beta+\alpha b-a\alpha e-\sum_{i=1}^rm_in_i =a\beta+b-ae-\sum_{i=1}^rm_in_i >0\text{ (by \eqref{Eqn2})}.
			\end{split}\]
			
			\noindent\textbf{Case (v):} $\alpha>1$ or $\beta >\lambda$\\
			By Lemma \ref{multi}, we have 
			$$\beta +\alpha(\lambda-e)\geq \sum_{i=1}^rn_i.$$ 
            Combining this together with $(1)$, we have the following:
			\begin{equation}\label{Eqn3}
				a(\beta +\alpha(\lambda-e))\geq a\left(\sum_{i=1}^r n_i\right)> \sum_{i=1}^rm_in_i.
			\end{equation}
			Now, we get
			\begin{equation*}
			\begin{split}
				L\cdot C &=a\beta+\alpha b-a\alpha e-\sum_{i=1}^r m_in_i\\
				&=a(\beta +\alpha(\lambda -e))+\alpha(b-a\lambda)-\sum_{i=1}^r m_in_i\\
				&> \alpha(b-a\lambda) \,  \text{(by \eqref{Eqn3})}\\
				&> 0 \text{ (by \eqref{A2C3})}.
			\end{split}
		\end{equation*}
		 Hence $L$ is ample.
		\end{proof}
	\end{theorem}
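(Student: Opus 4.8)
The strategy is the same as for Theorem \ref{AmpleTh1}: by the Nakai--Moishezon criterion it is enough to prove $L^2>0$ and $L\cdot C>0$ for every reduced irreducible curve $C$ on $\Fer$. For the self-intersection I would first upgrade the hypotheses: since $a>m_j$ by \eqref{A2C1} we have $am_j>m_j^2$, so \eqref{A2C2} gives $a\bigl(b+(i-e)a\bigr)>\sum m_j^2$ where the sum runs over any $k_i$ of the indices, for each $e\le i\le\lambda$. Applying this with $i=\lambda$ — for which $k_\lambda=2\lambda-e+1\ge r$ by the minimality of $\lambda$ — bounds the full sum $\sum_{j=1}^r m_j^2$. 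Then, writing
\[
L^2=2ab-a^2e-\sum_{j=1}^r m_j^2=a(b-a\lambda)+a\bigl(b+a(\lambda-e)\bigr)-\sum_{j=1}^r m_j^2,
\]
positivity follows from \eqref{A2C3} for the first term and from the bound just obtained for the remaining two.

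For the curve inequality, write $C=\alpha H_e+\beta F_e-\sum n_iE_i$, so that $\pi(C)\equiv\alpha C_e+\beta f$, and run through the possibilities in \cite[Corollary V.2.18]{Har}. When $\alpha=0$ or $\beta=0$, the fact that the $p_i$ are very general together with $e>0$ forces $C\in\{E_i,H_e,F_e,F_e-E_i\}$, and \eqref{A2C1}--\eqref{A2C3} give $L\cdot C>0$ in each case. When $\alpha=1$ and $e\le\beta\le\lambda$: Bézout against a general fiber gives $n_i\le1$, and Proposition \ref{dim} gives $h^0(C_e+\beta f)=2\beta-e+2$, so an irreducible member of $|C_e+\beta f|$ contains at most $k_\beta=2\beta-e+1$ of the very general points; hence at most $k_\beta$ of the $n_i$ equal $1$, and \eqref{A2C2} yields $\sum m_in_i<b+(\beta-e)a$, whence $L\cdot C=\bigl(b+(\beta-e)a\bigr)-\sum m_in_i>0$. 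Finally, when $\alpha>1$ or $\beta>\lambda$: Lemma \ref{multi} applied with $n=\lambda$ (legitimate since $\lambda\ge e$ and $2\lambda-e+1\ge r$) gives $\beta+(\lambda-e)\alpha\ge\sum_{i=1}^r n_i$, so by \eqref{A2C1} we get $a\bigl(\beta+(\lambda-e)\alpha\bigr)>\sum m_in_i$, and then
\[
L\cdot C=a\bigl(\beta+\alpha(\lambda-e)\bigr)+\alpha(b-a\lambda)-\sum m_in_i>\alpha(b-a\lambda)>0
\]
by \eqref{A2C3}. Since every reduced irreducible curve is covered, $L$ is ample.

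The main obstacle, as I see it, is not any single estimate but the correct choice of the cutoff parameter $\lambda$: it must be large enough that Lemma \ref{multi} applies uniformly to every curve with $\alpha>1$ or $\beta>\lambda$ (which is exactly what $2\lambda-e+1\ge r$ buys), while simultaneously governing both the finitely many sums in \eqref{A2C2} and the single inequality \eqref{A2C3}. Extracting the sharp count ``at most $k_\beta$ very general points lie on an irreducible member of $|C_e+\beta f|$'' from Proposition \ref{dim}, and checking that it dovetails with the index range $e\le i\le\lambda$ in \eqref{A2C2}, is the delicate bookkeeping; the intersection-number manipulations themselves are routine.
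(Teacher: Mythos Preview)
Your proposal is correct and follows essentially the same route as the paper: Nakai--Moishezon, the same decomposition $L^2=a(b-a\lambda)+a(b+a(\lambda-e))-\sum m_j^2$, and the identical case split on $(\alpha,\beta)$ culminating in the use of Lemma \ref{multi} with $n=\lambda$ for $\alpha>1$ or $\beta>\lambda$. The only cosmetic difference is that you bundle the degenerate cases $C\in\{E_i,H_e,F_e,F_e-E_i\}$ into a single ``$\alpha=0$ or $\beta=0$'' clause, whereas the paper treats them as three separate cases.
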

	 
	\begin{remark}
		From the above proof we can see that conditions \eqref{A2C1} and \eqref{A2C2} are necessary for ampleness. 
	\end{remark}
	The following theorem is motivated from \cite[Theorem 2.1]{Han}, where the author has obtained criterion for ampleness of line bundles on the blow up of $\mathbb{P}^2$.
	\begin{theorem}\label{AmpTh3}
		Suppose $e>0$. Let $\pi : \Fer \to \Fe$ be the blow up of $\Fe$ at very general points $p_1,p_2,\dots,p_r$, where $r \geq 3$. Let $L=aH_e+bF_e-m_1E_1-m_2E_2-\dots-m_rE_r$ be a line bundle on $\Fer$ such that:
		\begin{enumerate}
			\item \label{A3C1}$a>m_i>0$ ~~$\forall 1\le i\le r,$
			\item \label{A3C2} $b>ae,$
%
			\item \label{A3C3} $\displaystyle b^2-e(2ab-a^2e)\ge\frac{3}{s+2}\sum_{j=1} ^s m_{i_j}^2~~~\forall~ 2\le s \le r,$ 
			\item\label{A3C4} $\displaystyle 2ab-a^2e > \frac{s+3}{s+2}\sum_{j=1}^sm_{i_j}^2~~~\forall~ 2\le s \le r.$
		\end{enumerate}
		Then $L$ is ample.
		\begin{proof}
			 Since $\frac{r+3}{r+2}>1$, using \eqref{A3C4} we have,
			$$L^2=2ab-a^2e-\sum_{i=1}^rm_i^2>0.$$
			Now, we prove that $L\cdot C>0$ for all reduced irreducible curves on $\Fer$ considering the following cases.
			If $C=E_i$ for some $1\le i \le r$, then clearly $L\cdot C=m_i>0$.
			Now suppose that $C\neq E_i$, then $C$ is the strict transform of the curve $\pi(C)$. Let $n_i=\text{mult}_{p_i}\pi(C)$, so we can write $C=\alpha H_e+\beta F_e-n_1E_1-n_2E_2-\dots-n_rE_r$. \\
			
			\noindent \textbf{Case (i):} $n_i\ge 2$ for some $i$.\\
			It is easy to see that,
			\begin{equation}\label{Eqn4}
				(a\beta+b\alpha-a\alpha e)^2=(2ab-a^2e)(2\alpha\beta-\alpha^2e)+(a\beta - b\alpha)^2\ge(2ab-a^2e)(2\alpha\beta-\alpha^2e).
			\end{equation}
			Let $n=\min\{n_i|n_i\neq 0,1\le i \le r\}$ and $s$ denote the cardinality of the set $\{n_i|n_i\neq 0,1\le i \le r\}$. Then by \cite[Lemma 2.2]{HJNS}, we have
			$$2\alpha\beta-\alpha^2e\ge \sum_{i=1}^rn_i^2-n.$$
			Combining this inequality with \eqref{A3C4} and \eqref{Eqn4}, we have
			$$(a\beta+b\alpha-a\alpha   e)^2\ge(2ab-a^2e)(2\alpha\beta-\alpha^2e)>\left(\frac{s+3}{s+2}\sum_{i=1}^rm_i^2\right)\left(\sum_{i=1}^rn_i^2-n\right).$$
			Using \cite[Lemma 2.3]{Han}, we can see that (as $r \ge 3$):
			$$(a\beta+b\alpha-a\alpha e)^2 > \left(\sum_{i=1}^rm_in_i\right)^2.$$
			Thus $L\cdot C>0$ in this case.\\
			
			\noindent\textbf{Case (ii):} $\alpha \neq 0,\beta\neq 0$ and $C=\alpha H_e+\beta F_e-E_{i_1}-E_{i_2}-\dots-E_{i_s}$ for some $1\le s\le r$. \\
			
			As earlier by \cite[Lemma 2.2]{HJNS}, we have $2\alpha\beta-\alpha^2e\ge s-1$.  
			Also, note that
				\begin{equation}\label{Eqn-1}
				\begin{split}
					(\alpha b-a\beta)^2&=\alpha^2 b^2-2ab\alpha\beta+a^2\beta^2\\
					&=\alpha ^2 b^2+a\beta(a\beta-2b\alpha)\\
					&\ge\alpha^2 b^2+a\alpha e(a\alpha e-2b\alpha) \text{ (using $\beta \geq \alpha e$)}\\
					&\ge\alpha^2\left(\frac{3}{s+2}\sum_{j=1}m_{i_j}^2\right) \text{ (by \eqref{A3C3})}\\
					&\ge\frac{3}{s+2}\sum_{j=1}m_{i_j}^2.
				\end{split}
			\end{equation}
			Now,
			
			\begin{equation}\label{Eqn-2}
				\begin{split}
					(a\beta+b\alpha-a\alpha e)^2&=(2ab-a^2e)(2\alpha\beta-\alpha^2e)+(a\beta - b\alpha)^2 \text{ (by  \eqref{Eqn4})}\\
					&> (s-1)\frac{s+3}{s+2}\sum_{i=1}^sm_i^2+(\alpha b-a\beta)^2 \text{ (by \eqref{A3C4})}\\
					& \geq (s-1)\left(1+\frac{1}{s+2}\right)\sum_{i=j}^sm_{i_j}^2+\frac{3}{s+2}\sum_{j=1}m_{i_j}^2 \text{ (by \eqref{Eqn-1})}\\
					&=s\sum_{i=j}^sm_{i_j}^2+\left(\frac{s-1}{s+2}-1\right)\sum_{i=j}^sm_{i_j}^2+\frac{3}{s+2}\sum_{j=1}m_{i_j}^2\\
					&= s\sum_{i=j}^sm_{i_j}^2\\
					&\ge \left(\sum_{i=j}^sm_{i_j}\right)^2 \text{ (by Cauchy-Schwarz's inequality).}
				\end{split}
			\end{equation}
			Hence $L\cdot C>0$ in this case.\\
			
			The remaining cases are the following (using \cite[Corollary V.2.18]{Har})
			\begin{itemize}
				\item $\alpha=1, \beta =0$,
				\item $\alpha=0,\beta =1$,
				\item $\alpha=\beta=0$.
			\end{itemize}	

		In these cases, we can show $L\cdot C>0$, similar to the earlier cases using  \eqref{A3C1} and  \eqref{A3C2}. Thus we get that the line bundle $L$ is ample.

		\end{proof}
	\end{theorem}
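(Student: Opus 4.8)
The plan is to verify the Nakai--Moishezon--Kleiman criterion: it suffices to check $L^2>0$ and $L\cdot C>0$ for every reduced irreducible curve $C$ on $\Fer$. The inequality $L^2 = 2ab - a^2e - \sum_{i=1}^r m_i^2 > 0$ is immediate from condition \eqref{A3C4} applied with $s=r$, since $\frac{r+3}{r+2}>1$. For the curve intersections I would first dispose of the degenerate curves using \cite[Corollary V.2.18]{Har}: the reduced irreducible curves with $\alpha=0$ or $\beta=0$ lie in $\{E_i, H_e, F_e, F_e-E_i\}$, and for each of these $L\cdot C>0$ follows at once from \eqref{A3C1} and \eqref{A3C2} (for instance $L\cdot H_e = b-ae>0$, using that very general points avoid $C_e$). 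So the substance is in the remaining curves.

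For a general curve write $C = \alpha H_e + \beta F_e - \sum_i n_i E_i$ with $n_i = \mathrm{mult}_{p_i}\pi(C)\ge 0$, and note $\beta\ge\alpha e$ by \cite[Corollary V.2.18]{Har}, so the ``Hirzebruch part'' $a\beta + b\alpha - a\alpha e = (aH_e+bF_e)\cdot C \ge 0$. The goal becomes to prove
\[
(a\beta + b\alpha - a\alpha e)^2 > \Bigl(\sum_i m_i n_i\Bigr)^2,
\]
which gives $L\cdot C = (a\beta+b\alpha-a\alpha e) - \sum_i m_i n_i > 0$. The engine is the algebraic identity
\[
(a\beta + b\alpha - a\alpha e)^2 = (2ab - a^2e)(2\alpha\beta - \alpha^2 e) + (a\beta - b\alpha)^2,
\]
fed with lower bounds on $2\alpha\beta - \alpha^2 e$ coming from the geometry of $C$. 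I would split into the case where some $n_i\ge 2$ and the case where all nonzero $n_i$ equal $1$.

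In the first case, \cite[Lemma 2.2]{HJNS} gives $2\alpha\beta - \alpha^2 e \ge \sum_i n_i^2 - n$ with $n = \min\{n_i : n_i\neq 0\}$; plugging this and \eqref{A3C4} into the identity, the desired inequality should follow from the numerical lemma \cite[Lemma 2.3]{Han} (valid because $r\ge 3$), applied to the formal data $(m_i)$, $(n_i)$ rather than to the $\mathbb{P}^2$ self-intersection. In the second case, $C = \alpha H_e + \beta F_e - \sum_{j=1}^s E_{i_j}$ with $2\alpha\beta - \alpha^2 e \ge s-1$; here I would first combine $\beta\ge\alpha e$ with \eqref{A3C3} to get $(\alpha b - a\beta)^2 \ge \frac{3}{s+2}\sum_j m_{i_j}^2$, then substitute this together with \eqref{A3C4} into the identity and simplify, whereupon the cross terms are designed to cancel and one is left with $s\sum_j m_{i_j}^2 \ge \bigl(\sum_j m_{i_j}\bigr)^2$, true by Cauchy--Schwarz.

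I expect the main obstacle to be the bookkeeping in the ``all $n_i\le 1$'' case: arranging the constants $\tfrac{3}{s+2}$ and $\tfrac{s+3}{s+2}$ so that the telescoping is exact, and being careful to invoke \eqref{A3C3} and \eqref{A3C4} with $s$ equal to the actual number of points of $\{p_1,\dots,p_r\}$ on $\pi(C)$. A secondary point to check is that \cite[Lemma 2.3]{Han} can indeed be quoted verbatim in the mixed setting $2\alpha\beta-\alpha^2 e$; this should be harmless since only the abstract inequality between sums of squares is used, not any feature specific to $\mathbb{P}^2$.
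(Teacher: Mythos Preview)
Your proposal is correct and follows essentially the same route as the paper: Nakai--Moishezon, the identity $(a\beta+b\alpha-a\alpha e)^2=(2ab-a^2e)(2\alpha\beta-\alpha^2e)+(a\beta-b\alpha)^2$, the bound $2\alpha\beta-\alpha^2e\ge\sum n_i^2-n$ from \cite[Lemma 2.2]{HJNS} together with \cite[Lemma 2.3]{Han} for the case $n_i\ge 2$, and the Cauchy--Schwarz telescoping via \eqref{A3C3}--\eqref{A3C4} when all nonzero $n_i$ equal $1$. The only bookkeeping to watch is the subcase $s\le 1$ in the second branch, where \eqref{A3C3}--\eqref{A3C4} are not stated; this is handled directly from \eqref{A3C1}--\eqref{A3C2} since $a\beta\ge a>m_{i_1}$ and $\alpha(b-ae)>0$.
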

	\begin{remark}
		Note that for a line bundle $L$ on $\Fer$ with $L^2>0$, we get that $L\cdot C>0$ for all curves $C$ with $C^2\ge0$ using \eqref{Eqn4}. Hence, if a line bundle $L$ with $L^2>0$ is not ample, there exists a curve $C$ with $C^2<0$ such that $L\cdot C<0$.
	\end{remark}
    
	The following example shows that the set of line bundles satisfying Theorems \ref{AmpleTh1}, \ref{AmpleTh2} and \ref{AmpTh3} are different. 
	\begin{example}
		Consider the surface $\mathbb{F}_{10,12}$. Note that the line bundle $L=33H_e+331F_e-32E_1-32E_2-\dots-32E_{10}-E_{11}-E_{12}$ is ample using Theorem \ref{AmpleTh1}. But $L$ does not satisfy the conditions in Theorem \ref{AmpleTh2} and Theorem \ref{AmpTh3}. 
        
        Similarly, $32H_e+353F_e-31E_1-\dots-31E_{12}$ is an ample line bundle by Theorem \ref{AmpleTh2}, which does not satisfy the conditions of the other two theorems.

        Finally, consider the the line bundle $L:=2H_E+6F_e-E_1-E_2-\dots-E_{10}$ on $\mathbb{F}_{2,10}$, which  is ample by Theorem \ref{AmpTh3}. Note that $L$ does not satisfy the conditions in either Theorem \ref{AmpleTh1} or Theorem \ref{AmpleTh2}.
	\end{example}

\subsection*{Application to giving bounds on multi-point Seshadri constant}

The multi-point Seshadri constant of an ample line bundle $l$ on a smooth projective variety $X$ at the points $p_1, p_2 \ldots, p_r$ is the real
number
$$\varepsilon(X, l, p_1, \ldots, p_r):=\sup \left\{m\in \mathbb{R}_{\ge 0} \, : \, \pi^* l - m \, \sum_{i=1}^r E_i \text{ is ample }\right\},$$
where $\pi : \Tilde{X} \rightarrow X$ is the blow u of $X$ at $p_1, p_2 \ldots, p_r \in X$ (see \cite[Definition 1.5 and 1.8]{Bau}).

\noindent
Let $l=a C_e+b f$ be an ample line bundle on the Hirzebruch surface $\Fe$. Suppose $e>0$. Let $\pi : \Fer \to \Fe$ be the blow up of $\Fe$ at $r$ very general points $p_1,p_2,\dots,p_r$. Then, by Theorem \ref{AmpTh3}, we see that $\pi^* l- m \sum_{i=1}^r E_i$ is ample whenever 
\begin{align*}
    & m  < \sqrt{\frac{r+2}{r+3}} \, \sqrt{\frac{l^2}{r}}, \\
    & m < \sqrt{\frac{r+2}{3r}} \sqrt{[b^2-e \, l^2]} \text{ and}\\
    & m < a.
\end{align*}
Thus, we have the following lower-bound 
\begin{align*}
    \varepsilon(\Fe, l, p_1, \ldots, p_r) \ge \min \left\{ \sqrt{\frac{r+2}{r+3}} \, \sqrt{\frac{l^2}{r}}, \sqrt{\frac{r+2}{3r}} \sqrt{[b^2-e \, l^2]}, a \right\}.
\end{align*}
Note that this recovers \cite[Theorem 3.8]{HM} for the Hirzebruch surfaces.

Similarly, using Theorem \ref{AmpleTh1}, we get that
\begin{align*}
     \varepsilon(\Fe, l, p_1, \ldots, p_r) \ge \min \left\{\frac{b}{r}, a \right\},
\end{align*}
where $p_1,p_2,\dots,p_r$ are $r$ distinct points on $\Fe$ such that for each $i$, $p_i \notin C_e$, and for $i, j \in \{1, \ldots , r\}$ with $i \neq j$, $p_i$ and $p_j$ are not on the same fiber of the map $\phi : \Fe \to \mathbb{P}^1$.

Using Theorem \ref{AmpleTh2}, we get the following lower-bound. Let $p_1, p_2, \ldots, p_r$ be very general points on $\Fe$. Let $\lambda\ge e$ be the least integer such that $2\lambda -e+2>r$ and  let $k_i=2i-e+1$ for all $e\le i \le \lambda$. Let $l=aC_0 + b f$ be an ample line bundle on $\Fe$ satisfying $b > a \lambda$. Then, we have
\begin{align*}
     \varepsilon(\Fe, l, p_1, \ldots, p_r) \ge \min \left\{\min_{e \leq i \leq \lambda}\frac{b+(i-e)a}{k_i}, a \right\}.
\end{align*}

	\section{Global Generation}\label{Global Generation}
	In this section, we use the conditions for ampleness proved in Section \ref{Ample} and Reider's criterion (see Theorem \ref{Reider's}) to get sufficient conditions for the global generation of a line bundle.
	
	\begin{theorem}\label{GGTh1}
		Suppose $e>0$. Let $\pi : \Fer \to \Fe$ be the blow up of $\Fe$ at $r$ distinct points $p_1,p_2,\dots,p_r$ such that for each $i$, $p_i \notin C_e$, and for $i, j \in \{1, \ldots , r\}$ with $i \neq j$, $p_i$ and $p_j$ are not on the same fiber of the map $\phi : \Fe \to \mathbb{P}^1$. Let $L=aH_e+bF_e-m_1E_1-m_2E_2-\dots-m_rE_r$ be a line bundle on $\Fer$ such that:
		\begin{enumerate}
			\item \label{G1C1} $a+2>m_i+1>0$ ~~$\text{for } 1\le i\le r,$
			\item \label{G1C2} $b+e+2>(a+2)e ,$
			\item \label{G1C3} $b+e+2>\displaystyle \sum_{i=1}^{r}(m_i+1).$
		\end{enumerate}
		Then $L$ is globally generated.
		\begin{proof}
First note that we have the following.
\begin{enumerate}
				\item[a)] $\eqref{G1C1} \implies a\geq m_i\geq 0,$
				\item[b)] $\eqref{G1C2}\implies a(b+e+2)=ab+ae+2a\geq a(a+2)e=a^2e+2ae,$
				\item[c)] $\displaystyle \eqref{G1C3}\implies a(b+e+2)\geq a\left(\sum_{i=1}^r m_i+r+1\right)\geq \sum_{i=1}^rm_i^2+ra+a$ (by (a)).
			\end{enumerate}
			
			Let $N=(a+2)H_e+(b+e+2)F_e-(m_1+1)E_1-\dots-(m_r+1)E_r$ be a line bundle on $\Fer$. Then, by the assumption on $L$ and by Theorem \ref{AmpleTh1}, $N$ is ample. Note that $L=K_{\Fer}+N$. 
			Now,
			\begin{equation}\label{GG1N2}
				\begin{split}
					N^2&=2(a+2)(b+e+2)-(a+2)^2e-\sum_{i=1}^r(m_i+1)^2\\
					&=(ab+ae+2a)+(ab+ae+2a)+(2b+2e+4)+(2b+2e+4)\\
				 	&-a^2e-4ae-4e-\sum_{i=1}^r m_i^2-2\sum_{i=1}^r m_i-r\\
					&\geq ra+a-2ae-2e+2b+{4}+r \text{ (by (a), (b), (c) and \eqref{G1C3})}\\
					&=2(b-ae-e+2)+(r+1)a+r\\
					&{\geq r+4\geq 5} \text{ (by \eqref{G1C2} we have $b-ae-e+2\geq 1$)}.
				\end{split}
			\end{equation}
%
			So by Reider's criterion, if $L$ fails to be globally generated, then there is an effective divisor $D$ with $D^2=-1$ and $N\cdot D=0$ or $D^2=0$ and $N \cdot D=1$. But the first case cannot occur as $N$ is ample. 
			
			Suppose that there exists such a $D$ with $D^2=0$ and $N\cdot D=1$. Since $N$ is ample,  $N\cdot D=1$ implies that $D$ is a reduced irreducible curve. We will show that $N\cdot D >1$ for any reduced irreducible curve $D$ with $D^2=0$. 
            Write $D=\alpha H_e+\beta F_e-n_1E_1-\dots-n_rE_r$.
			 Note that if  $\alpha \neq 0 $, we have $\alpha \ge n_i$ for all $1\le i \le r$. So when $\alpha \neq 0$, by \eqref{G1C3} we have 
			\begin{equation}\label{thm4.1eq1}
				\alpha(b+e+2)\geq\alpha \left(\sum_{i=1}^r(m_i+1)+1\right)\geq \sum_{i=1}^r(m_i+1)n_i+\alpha.
			\end{equation}
			\textbf{Case (i):} $\alpha \neq 0$,  $\alpha>1$ or $\beta>\alpha e$
			\begin{equation*}
				\begin{split}
				\alpha(b+e+2)+\beta(a+2)-\alpha (a+2)e&\geq \sum_{i=1}^r(m_i+1)n_i+\alpha +(\beta-\alpha e)(a+2) \text{ by \eqref{thm4.1eq1}}\\
					&\ge\sum_{i=1}^r(m_i+1)n_i+2  \text{ (since $\alpha\geq 2$ or $\beta-\alpha e\ge 1,a> 0$)}.
				\end{split}
			\end{equation*}
			Hence $N\cdot D\ge {2}$.\\

			\noindent\textbf{Case (ii):} $\alpha=1$ and $\beta=e$.\\
			Since $D^2=0$, we have $D=H_e+eF_e-E_{i_1}-\dots-E_{i_e}$.
			Then$$N\cdot D= b+e+2-\sum_{i=1}^rn_i(m_i+1)= b+e+2-\sum_{j=1}^e(m_{i_j}+1).$$
			By \eqref{G1C1}, we have $a\ge m_i$, and \eqref{G1C2} gives 
            \begin{align*}
               b+e+2\ge(a+2)e+{1}\geq\sum_{j=1}^e(m_{i_j}+2)+1=\sum_{j=1}^e(m_{i_j}+1)+e+1. 
            \end{align*}
             Hence, we have 
			$$N\cdot D\ge{2} \text{ (since $e\ge 1$)}.$$
			\textbf{Case (iii):} $\alpha=1$ and $\beta =0$.\\
			Here $D=H_e$, but $D^2=0$ suggests $e=0$. But we are not considering {$e=0$} case.\\
			
			\noindent\textbf{Case (iv):} $\alpha=0$ and $\beta =1$\\
			Since $D^2=0$, we have $D=F_e$. Then $N\cdot D=a+2\ge 2$.\\
			
			\noindent\textbf{Case (v):} $\alpha=\beta=0$\\
			In this case, as $D$ is reduced and irreducible, $D=E_i$ for some $i$, but $D^2=-1$.
			So there is no curve $D$ with $D^2=0$ and $N\cdot D=1$. Hence, using Reider's criterion, we conclude that $L$ is globally generated.
		\end{proof}
	\end{theorem}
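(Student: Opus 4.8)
The plan is to realize $L$ as an adjoint line bundle $K_{\Fer}+N$ and then apply Reider's criterion (Theorem~\ref{Reider's}) to $N$. First I would set $N=(a+2)H_e+(b+e+2)F_e-(m_1+1)E_1-\dots-(m_r+1)E_r$; since $K_{\Fer}=-2H_e-(e+2)F_e+\sum_{i=1}^r E_i$, one has $L=K_{\Fer}+N$. The three hypotheses on $L$ translate exactly into conditions \eqref{A1C1}, \eqref{A1C2}, \eqref{A1C3} of Theorem~\ref{AmpleTh1} for $N$ (with $a$, $b$, $m_i$ there replaced by $a+2$, $b+e+2$, $m_i+1$, and with the same positioning hypotheses on the points), so $N$ is ample; in particular $N\cdot C>0$ for every curve $C$ on $\Fer$.

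Next I would bound $N^2$ from below, which is the computational core of the argument. Rewriting the three hypotheses in the convenient forms $a\ge m_i\ge 0$, $\;a(b+e+2)\ge a^2e+2ae$, and $\;a(b+e+2)\ge \sum_{i=1}^r m_i^2+ra+a$, one expands $N^2=2(a+2)(b+e+2)-(a+2)^2e-\sum_{i=1}^r(m_i+1)^2$ and uses each of these, together with \eqref{G1C3} in the form $\sum_{i=1}^r m_i< b+e+2-r$, to dominate the negative terms; this should yield $N^2\ge r+4\ge 5$ (the required slack $b-ae-e+2\ge 1$ coming from \eqref{G1C2}).

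With $N^2\ge 5$, Reider's criterion says that if $L=K_{\Fer}+N$ is not globally generated then there is an effective divisor $D$ with either $N\cdot D=0,\ D^2=-1$ or $N\cdot D=1,\ D^2=0$. The first is excluded since $N$ is ample, and in the second, ampleness of $N$ forces $D$ to be a reduced irreducible curve (otherwise $N\cdot D\ge2$). So it remains to show that no reduced irreducible curve $D=\alpha H_e+\beta F_e-\sum_{i=1}^r n_iE_i$ on $\Fer$ with $D^2=0$ satisfies $N\cdot D=1$, where $n_i=\text{mult}_{p_i}\pi(D)\ge0$. Here I would use the classification of irreducible curves on Hirzebruch surfaces (\cite[Corollary V.2.18]{Har}), which gives $\beta\ge\alpha e$ whenever $\alpha\ge1$, together with the B\'ezout bound $\alpha\ge n_i$ from intersecting $\pi(D)$ with the fiber through $p_i$, and then run a short case analysis on $(\alpha,\beta)$: if $\alpha=\beta=0$ then $D=E_i$ and $D^2=-1$; if $\alpha=1,\beta=0$ then $D=H_e$ and $D^2=-e\ne0$; if $\alpha=0,\beta=1$ then $D=F_e$ and $N\cdot D=a+2\ge2$; if $\alpha=1,\beta=e$ then $D^2=0$ forces exactly $e$ of the $n_i$ to equal $1$ and \eqref{G1C2} gives $N\cdot D\ge e+1\ge2$; and if $\alpha\ge2$ or $\beta>\alpha e$ then \eqref{G1C3}, the bound $\alpha\ge n_i$, and the nonnegative term $(a+2)(\beta-\alpha e)$ together force $N\cdot D\ge2$. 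In every case $N\cdot D\ge2$, contradicting $N\cdot D=1$, so $L$ is globally generated.

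I expect the step requiring the most care to be the estimate $N^2\ge5$: one has to split $N^2$ into the right blocks and pair each against the correct consequence of a hypothesis without losing the terms linear in $r$, and it is exactly those terms that push the bound above $5$ for all $r$. The Reider case analysis is then routine; the geometric positioning of the $p_i$ (no point on $C_e$, no two on a common fiber) enters only through the applicability of Theorem~\ref{AmpleTh1} to $N$ and through the list of low-degree irreducible curves on $\Fer$.
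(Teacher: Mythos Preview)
Your proposal is correct and follows essentially the same route as the paper: define $N$ so that $L=K_{\Fer}+N$, verify $N$ is ample via Theorem~\ref{AmpleTh1}, bound $N^2\ge r+4\ge5$ by the same three derived inequalities, and then run the identical case analysis on $(\alpha,\beta)$ to exclude $D^2=0$, $N\cdot D=1$. The organization and the individual estimates (including $N\cdot D\ge e+1$ in the $\alpha=1,\beta=e$ case) match the paper's proof.
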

	
	\begin{theorem}\label{GGTh2}
		Suppose $e>0$ . Let $\pi : \Fer \to \Fe$ be the blow up of $\Fe$ at very general points $p_1,p_2,\dots,p_r$. Let $\lambda\ge e$ be the least integer such that $2\lambda -e+2>r$. Also, let $k_i=2i-e+1$ for all $e\le i \le \lambda$. Let $L=aH_e+bF_e-m_1E_1-m_2E_2-\dots-m_rE_r$ be a line bundle on $\Fer$ such that:
		\begin{enumerate}
			\item \label{G2C1} $a+2>m_i+1>0$ ~$\text{for } 1\le i\le r,$
			\item \label{G2C2}$b+e+2+(i-e)(a+2)>\displaystyle \sum_{\substack{\text{any distinct }\\k_i \text{ of } m_j}}(m_j+1)+1$ ~$\text{for } e\le i\le \lambda$,
			\item\label{G2C3} $b+e+2\ge(a+2)\lambda+1$.
		\end{enumerate}
		Then $L$ is globally generated.
		\begin{proof}
			The strategy of the proof is similar to the proof of the previous theorem. As before, let $N=(a+2)H_e+(b+e+2)F_e-(m_1+1)E_1-\dots-(m_r+1)E_r$ be a line bundle on $\Fer$. Then by assumption on $L$ and by Theorem \ref{AmpleTh2}, $N$ is an ample line bundle on $\Fer$. Also note that $L=K_{\Fer}+N$. 
			

{
Note that from \((1), (2)\) and \((3)\) above we have:
			\begin{align*}
				& a \geq m_i,\\
				& b+e+2 \geq (a+2) \lambda +1,\\
				& b+e+2+(\lambda-e)(a+2) \geq \sum_{i=1}^r m_i +r+2. 
			\end{align*}
			\begin{equation}
				\begin{split}
					N^2&=2(a+2)(b+e+2)-(a+2)^2e-\sum_{i=1}^r (m_i+1)^2\\
					&=(ab+ae+2a)+(ab+ae+2a)+(2b+2e+4)+(2b+2e+4)\\
					&-a^2e-4ae-4e-\sum_{i=1}^r m_i^2-2\sum_{i=1}^r m_i-r\\
					&\geq (a^2 \lambda + 2 a \lambda +a)+(\sum_{i=1}^r m_i^2 +ra + 2a -(\lambda-e)(a^2+2a))+(2 a \lambda +4 \lambda +2) \\
					&+(2 \sum_{i=1}^r m_i +2r +4 -(\lambda -e)(2a+4) )-a^2e-4ae-4e-\sum_{i=1}^r m_i^2-2\sum_{i=1}^r m_i-r\\
					&=3a+ra+a^2 e + 2ae+4 \lambda +6+r+2a e-4 \lambda +4e-a^2 e -4ae-4e\\
					& \geq r+6 \geq 5.
				\end{split}
			\end{equation}
}

			Since $N$ is ample, using Reider's criterion, if $L$ fails to be globally generated, then there exists an effective divisor $D$ with $D^2=0$ and $N \cdot D=1$. 
			
			
			To finish the proof we will show that $N\cdot D >1$ for any reduced irreducible curve $D$ with $D^2=0$.
			We consider the following cases:\\
            
		\noindent	\textbf{Case (i):} $\alpha=\beta=0$\\
			Here $D=E_i$, but $D^2=-1\neq0.$\\
			
			\noindent\textbf{Case (ii):} $\alpha=0$ and $\beta=1$\\
			Similarly, since $D^2=0$, we have $D=F_e$. Then $N\cdot D=a+2>1.$\\
            
			Now we can assume $\alpha \ne 0$ and the remaining cases are the following.\\
			\noindent\textbf{Case (iii):} $\alpha=1$ and $\beta=0$\\
			This case is not under consideration with the same reason in the proof of Theorem \ref{GGTh1} as $e>0$.\\
			
			\noindent\textbf{Case (iv):} $\alpha=1$ and $e \le \beta \le \lambda$\\
			$\alpha=1$ gives us $0\le n_i\le 1$. Thus, $D=H_e+\beta F_e-E_{i_1}-\dots - E_{i_s}$. Since blown up points are very general we have $s<2\beta - e+2=h^0(\Fe, C_e+ \beta f)$.
			
			Hence, we have
			\begin{equation*}
				\begin{split}
					N\cdot D&=(a+2)\beta+b+e+2-(a+2)e-\sum_{j=1}^{s}(m_{i_j}+1)\\
					&=b+e+2+(\beta-e)(a+2)-\sum_{j=1}^{s}(m_{i_j}+1)\\
					& > 1\text{ (by \eqref{G2C2})}.\\
				\end{split}
			\end{equation*}
			
		\noindent	\textbf{Case (v):} $\alpha>1$ or $\beta >\lambda$\\
			By Lemma \ref{multi}, we have $\displaystyle\beta+(\lambda-e)\alpha\ge \sum_{i=1}^rn_i$. Since $a+2>m_i+1$ we have,
			\begin{equation}\label{eqnn}
				(a+2)(\beta+(\lambda-e)\alpha)> \sum_{i=1}^r(m_i+1)n_i.
			\end{equation}
			Hence,
			\begin{equation*}
				\begin{split}
					N\cdot D&=\alpha(b+e+2)+\beta(a+2)-\alpha(a+2)e-\sum_{i=1}^r(m_i+1)n_i\\
					&=(a+2)(\beta+(\lambda-e)\alpha)-(a+2)\alpha\lambda+\alpha(b+e+2)-\sum_{i=1}^r(m_i+1)n_i\\
					&> \alpha(b+e+2-(a+2)\lambda) \text{ (by \eqref{eqnn})}\\
					&\ge 1 \text{ (by \eqref{G2C3} and } \alpha \ne 0).
				\end{split}
			\end{equation*}
            Hence, using Reider's criterion we see that $L$ is globally generated.
		\end{proof}
	\end{theorem}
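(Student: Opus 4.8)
The plan is to deduce global generation of $L$ from the ampleness criteria of Section \ref{Ample} via adjunction, exactly as in the proof of Theorem \ref{GGTh1}. Set
$$N = (a+2)H_e + (b+e+2)F_e - (m_1+1)E_1 - \cdots - (m_r+1)E_r,$$
so that $L = K_{\Fer} + N$. The first step is to observe that the hypotheses \eqref{G2C1}, \eqref{G2C2}, \eqref{G2C3} on $L$ are precisely the hypotheses of Theorem \ref{AmpleTh2} applied to $N$: the integer $\lambda$ and the numbers $k_i = 2i-e+1$ depend only on $e$ and $r$, so they are unchanged; condition \eqref{G2C1} becomes $a+2 > m_i+1 > 0$, condition \eqref{G2C2} becomes $(b+e+2) + (i-e)(a+2) > \sum (m_j+1)$ over any $k_i$ of the indices, and \eqref{G2C3} gives $b+e+2 > (a+2)\lambda$. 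Hence $N$ is ample.

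Next I would show $N^2 \ge 5$. Expanding $N^2 = 2(a+2)(b+e+2) - (a+2)^2e - \sum_{i=1}^r (m_i+1)^2$ and substituting the three easy consequences of the hypotheses — $a \ge m_i$, $b+e+2 \ge (a+2)\lambda + 1$, and the case $i = \lambda$ of \eqref{G2C2}, i.e. $b+e+2+(\lambda-e)(a+2) \ge \sum_{i=1}^r m_i + r + 2$ — a direct (if slightly tedious) computation yields a lower bound of the form $r + 6$, well above $5$.

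By Reider's criterion (Theorem \ref{Reider's}), if $L = K_{\Fer}+N$ were not globally generated there would be an effective divisor $D$ with $N\cdot D = 0$, $D^2=-1$ or with $N\cdot D=1$, $D^2=0$; ampleness of $N$ rules out the first, and in the second $D$ is a reduced irreducible curve. So it suffices to prove $N\cdot D \ge 2$ for every reduced irreducible $D = \alpha H_e + \beta F_e - \sum n_i E_i$ with $D^2 = 0$. Using \cite[Corollary V.2.18]{Har} and the very-generality of the $p_i$ (no $p_i$ on $C_e$, at most one on any fiber, and at most $h^0(\Fe, C_e+\beta f)-1 = 2\beta-e+1 = k_\beta$ of the $p_i$ on a curve in $|C_e+\beta f|$, by Proposition \ref{dim}), the classes to treat are $(\alpha,\beta) = (0,0)$ (then $D = E_i$, $D^2 = -1$, excluded), $(0,1)$ (then $D = F_e$, $N\cdot D = a+2\ge 2$), $(1,0)$ (forces $e=0$, excluded), $\alpha = 1$ with $e \le \beta \le \lambda$ (Bézout gives $n_i \le 1$, at most $k_\beta$ of them nonzero, so \eqref{G2C2} gives $N\cdot D > 1$), and $\alpha > 1$ or $\beta > \lambda$ (Lemma \ref{multi} gives $\beta + (\lambda-e)\alpha \ge \sum n_i$, whence $(a+2)(\beta+(\lambda-e)\alpha) > \sum (m_i+1)n_i$ by \eqref{G2C1}, and regrouping $N\cdot D$ gives $N\cdot D > \alpha\bigl(b+e+2-(a+2)\lambda\bigr) \ge 1$ by \eqref{G2C3} and $\alpha\ge 1$). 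In each case $N\cdot D\ge 2$, contradicting the existence of $D$, so $L$ is globally generated.

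I expect the main obstacle to be the last two cases: one must pin down exactly how many of the $p_i$ can lie on the image curve $\pi(D)$ — governed by Proposition \ref{dim} in the range $\beta \le \lambda$ and by Lemma \ref{multi} for $\alpha > 1$ or $\beta > \lambda$ — since this is precisely what makes the combinatorial sums ``over any distinct $k_i$ of the $m_j$'' in hypothesis \eqref{G2C2} the right thing to assume. Once that bookkeeping is correctly set up, every inequality needed is an immediate consequence of \eqref{G2C1}--\eqref{G2C3}, and the $N^2 \ge 5$ bound is the only place requiring careful arithmetic.
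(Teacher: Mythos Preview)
Your proposal is correct and follows essentially the same approach as the paper's proof: set $N = L - K_{\Fer}$, verify that $N$ satisfies Theorem \ref{AmpleTh2}, bound $N^2 \ge r+6 \ge 5$ via the three consequences $a\ge m_i$, $b+e+2\ge(a+2)\lambda+1$, and the $i=\lambda$ instance of \eqref{G2C2}, and then run the same five-case analysis (with Proposition \ref{dim} for $\alpha=1,\,e\le\beta\le\lambda$ and Lemma \ref{multi} for $\alpha>1$ or $\beta>\lambda$) to exclude a Reider divisor with $D^2=0$, $N\cdot D=1$. The only cosmetic point is that \eqref{G2C2} is slightly \emph{stronger} than what Theorem \ref{AmpleTh2} requires (the extra ``$+1$'' is there precisely to push $N\cdot D$ past $1$ in Case (iv)), so ``precisely the hypotheses'' is a mild overstatement, but this does not affect the argument.
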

	
	\begin{theorem}\label{GG3}
		Suppose $e>0$. Let $p_1, p_2, \dots , p_r \in \mathbb{F}_e$ be very general points with $r \geq 3$. Let $L =a \, H_e + b \, F_e -\sum\limits_{i=1}^r m_iE_i$ be a line bundle on $\mathbb{F}_{e,r}$ such that
		\begin{enumerate}
			\item \label{G3C1} $a+2>m_i +1 > 2$ for all $i=1, \ldots, r$,
			\item \label{G3C2}$b+e+2 > (a+2) e$,
			\item \label{G3C3}$\displaystyle(b+e+2)^2-e[2(a+2)(b+e+2)-(a+2)^2e]\ge\frac{3}{s+2}\sum_{j=1}^s(m_{i_j}+1)^2~~~\forall~ 2\le s \le r,$ 
			\item \label{G3C4} $\displaystyle2(a+2)(b+e+2)-(a+2)^2e > \frac{s+3}{s+2} \sum_{i=1}^s (m_{i_j}+1)^2$, \, $\forall 2 \leq s \leq r$.
		\end{enumerate}
		Then $L$ is globally generated.
		\begin{proof}
			Let $N=(a+2)H_e + (b+e+2) F_e - \sum_{i=1}^r \, (m_i+1) E_i$ be a line bundle on $\mathbb{F}_{e, r}$. Then by the assumption on $L$ and Theorem \ref{AmpTh3}, $N$ is ample. 
			
			Note that
			\begin{equation*}
				\begin{split}
					N^2& = 2(a+2)(b+e+2)-(a+2)^2e - \sum_{i=1}^r \, (m_i+1)^2\\
					& > \frac{1}{r+2}\, \sum_{i=1}^r (m_i+1)^2 \, (\text{using \eqref{G3C4} for }s=r),\\
					& \geq 5 \, \text{as } r \geq 3 \text{ and } m_i \geq 2 \text{ (cf. \cite[Proof of Theorem 3.1]{Han})}.
				\end{split}
			\end{equation*}
			Then using Reider's criterion, it suffices to show that there is no reduced irreducible curve $D$ with $D^2=0$ and $D \cdot N=1$. Let $D= \alpha H_e + \beta F_e - \sum_{i=1}^r n_iE_i$. Then, we have
			\begin{equation}\label{GG3e2}
				D^2= -\alpha^2 e+ 2 \alpha \beta -\sum_{i=1}^r n_i^2=0,
			\end{equation}
			and
			\begin{equation*}\label{GG3e3}
				D \cdot N = \alpha (b+e+2) - \alpha (a+2) e + (a+2) \beta- \sum_{i=1}^r(m_i+1)n_i.
			\end{equation*}
			 Using \eqref{Eqn4}, we have
			\begin{equation*}
				\begin{split}
					\left(D \cdot N + \sum_{i=1}^r(m_i+1)n_i\right)^2 & \geq [2 (a+2) (b+e+2) -(a+2)^2 e](2 \alpha \beta- \alpha^2 e)\, \\
					& \geq \frac{r+3}{r+2} \left(\sum_{i=1}^r(m_i+1)^2\right) \left(\sum_{i=1}^r n_i\right)\, (\text{by \eqref{G3C4} and \eqref{GG3e2}})\\
					& \geq \left(\sum_{i=1}^r (m_i+1)n_i+1\right)^2 \, (\text{see \cite[Proof of Theorem 3.1]{Han}}).
				\end{split}
			\end{equation*}
			Thus, we have $N \cdot D > 1$, which is a contradiction. Hence, $L$ is globally generated.
		\end{proof}
	\end{theorem}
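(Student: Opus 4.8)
The plan is to mimic exactly the structure of the proofs of Theorems \ref{GGTh1} and \ref{GGTh2}, but feeding in the ampleness criterion of Theorem \ref{AmpTh3} instead of Theorems \ref{AmpleTh1} or \ref{AmpleTh2}. First I would set $N = (a+2)H_e + (b+e+2)F_e - \sum_{i=1}^r (m_i+1)E_i$ and observe that $L = K_{\mathbb{F}_{e,r}} + N$, since $K_{\mathbb{F}_{e,r}} = -2H_e - (e+2)F_e + \sum E_i$. Then I would check that $N$ satisfies the four hypotheses of Theorem \ref{AmpTh3} with $a,b,m_i$ replaced by $a+2$, $b+e+2$, $m_i+1$: conditions \eqref{G3C1}--\eqref{G3C4} are precisely the translates of \eqref{A3C1}--\eqref{A3C4} under this shift (here the requirement $m_i+1>2$, i.e. $m_i\ge 2$, is needed so that the $m_i+1\ge 3$ estimate used in Hanumanthu's lemma applies). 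Hence $N$ is ample.

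Next I would verify the Reider hypothesis $N^2 \ge 5$. Expanding $N^2 = 2(a+2)(b+e+2) - (a+2)^2 e - \sum(m_i+1)^2$ and applying \eqref{G3C4} with $s=r$ gives $N^2 > \frac{1}{r+2}\sum_{i=1}^r (m_i+1)^2$; since $r\ge 3$ and each $m_i+1\ge 3$, the right side is at least $\frac{1}{r+2}\cdot 9r \ge 5$ (this is the same elementary estimate as in \cite[Proof of Theorem 3.1]{Han}). So Reider's criterion (Theorem \ref{Reider's}) applies to $K+N = L$.

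By Reider, if $L$ fails to be globally generated there is an effective divisor $D$ with either $N\cdot D = 0$, $D^2 = -1$, or $N\cdot D = 1$, $D^2 = 0$. The first is impossible since $N$ is ample. For the second, because $N$ is ample, $N\cdot D = 1$ forces $D$ to be a reduced irreducible curve, so it suffices to show $N\cdot D \ge 2$ for every reduced irreducible curve $D$ with $D^2 = 0$. Writing $D = \alpha H_e + \beta F_e - \sum n_i E_i$ with $n_i = \mathrm{mult}_{p_i}\pi(D) \ge 0$, I would use $D^2 = -\alpha^2 e + 2\alpha\beta - \sum n_i^2 = 0$ together with the algebraic identity \eqref{Eqn4}, namely $(a\beta + b\alpha - a\alpha e)^2 = (2ab - a^2 e)(2\alpha\beta - \alpha^2 e) + (a\beta - b\alpha)^2$ applied with the shifted coefficients, to get
\[
\left(N\cdot D + \sum_{i=1}^r (m_i+1)n_i\right)^2 \ge \left[2(a+2)(b+e+2) - (a+2)^2 e\right]\left(\sum_{i=1}^r n_i^2\right),
\]
then bound $\sum n_i^2 \ge \sum n_i$ (valid since $n_i\in\mathbb{Z}_{\ge 0}$, actually I want $\sum n_i^2$ compared suitably) and invoke \eqref{G3C4} and \cite[Lemma 2.3]{Han} in the same way as in the proof of Theorem \ref{GG3} above, to conclude $\left(N\cdot D + \sum(m_i+1)n_i\right)^2 \ge \left(\sum(m_i+1)n_i + 1\right)^2$, hence $N\cdot D \ge 1$ — and in fact the strict inequalities give $N\cdot D \ge 2$ in the cases where $D$ is not one of the boundary curves $F_e$, $F_e - E_i$, etc., which are handled directly using \eqref{G3C1}--\eqref{G3C2}.

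The main obstacle I expect is the same bookkeeping subtlety that appears in Theorem \ref{GG3}: carefully separating the "generic" reduced irreducible $D$ (where the Cauchy--Schwarz / \cite[Lemma 2.3]{Han} argument forces $N\cdot D\ge 2$) from the finitely many special curves with $\alpha$ or $\beta$ zero (or $\alpha = 1, \beta = e$), which must be dispatched by hand, and making sure the numerics of the "$+1$" improvement actually yield $N\cdot D\ge 2$ rather than just $N\cdot D\ge 1$ — since Reider only rules out $N\cdot D = 1$, not $N\cdot D = 0$, I need the strict inequality to close the gap. Since very general position guarantees $C_e$ passes through none of the $p_i$ and each fiber through at most one $p_i$, the special-curve cases reduce to $D\in\{F_e, F_e - E_i\}$, on which $N\cdot D = a+2 \ge 3$ or $N\cdot D = m_i+1\ge 3$ respectively, so there is nothing delicate there; the delicate point is purely the inequality manipulation, which is essentially identical to \cite[Proof of Theorem 3.1]{Han} and to the proof of Theorem \ref{GG3} already given.
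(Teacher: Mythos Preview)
Your proposal is correct and follows exactly the paper's approach: set $N = L - K_{\mathbb{F}_{e,r}}$, verify $N$ is ample via Theorem \ref{AmpTh3}, check $N^2 \ge 5$ from \eqref{G3C4} with $s=r$, and then use the identity \eqref{Eqn4} together with \eqref{G3C4} and the estimate from \cite[Proof of Theorem 3.1]{Han} to obtain $N\cdot D > 1$ for every reduced irreducible $D$ with $D^2=0$. The case analysis you anticipate is not needed: the paper dispatches all such $D$ in a single inequality chain, since $D^2=0$ gives $2\alpha\beta-\alpha^2 e = \sum n_i^2$ directly; the curves $F_e-E_i$ and $E_i$ have self-intersection $-1$ and so never enter the discussion (and incidentally $N\cdot(F_e-E_i)=a+1-m_i$, not $m_i+1$).
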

	
	\section{Very Ampleness}\label{Very Ampleness}
	
	
	In this section, we provide sufficient conditions to check whether a given line bundle is very ample. Here, we again use the conditions for ampleness that we developed in Section \ref{Ample}.
	\begin{theorem}
		Suppose $e>0$. Let $\pi : \Fer \to \Fe$ be the blow up of $\Fe$ at very general points $p_1,p_2,\dots,p_r$. Let $L=aH_e+bF_e-m_1E_1-m_2E_2-\dots-m_rE_r$ be a line bundle on $\Fer$ such that:
		\begin{enumerate}
			\item \label{V1C1}$a+2>m_i+2>2$ ~$\text{for } 1\le i\le r,$ 
			\item \label{V1C2}$b+e+2>(a+2)e+1,$ 
			\item \label{V1C3} $b+e+2>\displaystyle \sum_{i=1}^{r}(m_i+1)+2.$
		\end{enumerate}
		Then $L$ is very ample.
		\begin{proof}
			Let $$N=(a+2)H_e+(b+e+2)F_e-(m_1+1)E_1-\dots-(m_r+1)E_r,$$
			so that $L=K_{\Fer}+N$ and $N$ is ample using Theorem \ref{AmpleTh1}. 
			From the assumptions, we have 
		\begin{equation}\label{E1}
				\begin{split}
				    & (a+2)(b+e+2)\ge(m_i+3)\left(\sum_{i=1}^{r}(m_i+1)+3\right)=\sum_{i=1}^{r}(m_i+1)^2+2\sum_{i=1}^r(m_i+1)+3(m_i+3),\\
                & (a+2)(b+e+2)\geq (a+2)^2e+(a+2).
				\end{split}
			\end{equation} 
			Using \eqref{E1}, we have
			\begin{equation*}
				\begin{split}
					N^2&=2(a+2)(b+e+2)-(a+2)^2e-\sum_{i=1}^r(m_i+1)^2\\
					&\geq (a+2)(b+e+2)-(a+2)^2e+2\sum_{i=1}^r(m_i+1)+3(m_i+3)\\
					&\ge (a+2)+2\sum_{i=1}^r(m_i+1)+3(m_i+3)\\
					&\ge 10.
				\end{split}
			\end{equation*}
			Now applying Reider's criterion, we get that if $L$ is not very ample, then there exists an effective divisor $D$ such that one of the following holds:
			\begin{enumerate}
				\item[(a)] $N\cdot D=1$, $D^2=0$ or $-1$,
				\item[(b)] $N\cdot D=2$, $D^2=0$.
			\end{enumerate}
			
			The case $N\cdot D=1$, $D^2=0$, has already been addressed in the proof of Theorem \ref{GGTh1}. If $N \cdot D=1$, it is clear that $D$ is an irreducible reduced curve, since $N$ is ample. When $N \cdot D=2$, $D$ is either a reduced  irreducible curve or a sum of exactly two reduced, irreducible curves $D_1$ and $D_2$, with $N\cdot D_1=N\cdot D_2=1 \text{ and } (D_1+D_2)^2=0$. 
            Let $D= \alpha H_e + \beta F_e - \sum_{i=1}^r n_iE_i$.
			
			Note that $\eqref{V1C1}$ gives $a\ge1$ and $a>m_i$.
			We consider the following cases.\\
			
			\noindent \textbf{Case (i):} $\alpha \neq 0$, $\alpha>1$ or $\beta>\alpha e$\\
			 Similar to the case (i) in the proof of the Theorem \ref{GGTh1}, we get $N\cdot D>3$.\\
			 
			 \noindent\textbf{Case (ii):} $\alpha=1$ and $\beta=e$.\\
			  Here $D=H_e+eF_e-E_{i_1}-\dots-E_{i_{s}}$. Since blown up points are very general and $h^0(C_e+ef)=e+2$, we have $s\le e+1$. Then, we have
              $$N\cdot D= b+e+2-\sum_{i=1}^rn_i(m_i+1)\ge b+e+2-\sum_{j=1}^s(m_{i_j}+1)>2 \text{ (by \eqref{V1C3})}.$$
			  
			 \noindent \textbf{Case (iii):} $\alpha=1$ and $\beta =0$\\
			 Then, we have $D=H_e$, so $D^2=-e$. Hence, we get
             $$N\cdot D=b+e+2-(a+2)e> 1 \text{ by \eqref{V1C2}.}$$

			 \noindent \textbf{Case (iv):} $\alpha=0$ and $\beta =1$\\
			 Then $D=F_e$ or $D=F_e-E_i$ for some $i$. If $D=F_e$, we get $$N\cdot D=a+2>2.$$
			 Now let $D=F_e-E_i$, then
			 $$N\cdot D=a+2-(m_i+1)=a-m_i+1\ge2.$$
			 \textbf{Case (v):} $\alpha=\beta=0$\\
			 In this case as $D$ is reduced and irreducible $D=E_i$ for some $i$, then $N \cdot D=m_i+1>1.$\\
			 
			 From the above cases, it is clear that there is no irreducible reduced curve $D$ with $N \cdot D=1$. 
            The possible curves $D$ for which $N \cdot D=2$ are $H_e$, $F_e-E_i$ or $E_i$ for some $i$. But these curves have self intersection atmost $-1$. Hence $L$ is very ample. 
\end{proof}
	\end{theorem}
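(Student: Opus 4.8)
The plan is to apply Reider's criterion to the line bundle $N=(a+2)H_e+(b+e+2)F_e-\sum_{i=1}^r(m_i+1)E_i$, noting that $L=K_{\Fer}+N$. First I would verify that $N$ is ample: conditions \eqref{V1C1}, \eqref{V1C2}, \eqref{V1C3} are precisely the hypotheses of Theorem \ref{AmpleTh1} applied to $N$ (after checking that $a+2>m_i+1>0$, $b+e+2>(a+2)e$, and $b+e+2>\sum(m_i+1)$ all follow), so $N$ is ample. Then I would establish $N^2\ge 10$ using the arithmetic inequalities extracted from the hypotheses, roughly as in the global generation proofs: expand $N^2=2(a+2)(b+e+2)-(a+2)^2e-\sum(m_i+1)^2$ and bound it below using \eqref{V1C1}--\eqref{V1C3}.

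Once $N^2\ge 10$ and $N$ is ample, Reider's criterion says that if $L=K_{\Fer}+N$ fails to be very ample, there is an effective divisor $D$ with one of: (a) $N\cdot D=0$ and $D^2=-1$ or $-2$; (b) $N\cdot D=1$ and $D^2=0$ or $-1$; (c) $N\cdot D=2$ and $D^2=0$. Since $N$ is ample, case (a) is immediately excluded. So I must rule out the existence of $D$ in cases (b) and (c). For case (b), since $N$ is ample, $N\cdot D=1$ forces $D$ to be a reduced irreducible curve; I would write $D=\alpha H_e+\beta F_e-\sum n_iE_i$ and, using \cite[Corollary V.2.18]{Har} together with the very-general position of the points and Bézout bounds $n_i\le\alpha$, run through the finitely many shapes of $D$ ($\alpha=\beta=0$; $\alpha=0,\beta=1$; $\alpha=1,\beta=0$; $\alpha=1,\beta=e$; and $\alpha>1$ or $\beta>\alpha e$) and show $N\cdot D\ge 2$ in each, exactly mirroring the case analysis already done in Theorem \ref{GGTh1}. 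For case (c), since $N$ is ample and $N\cdot D=2$, $D$ is either reduced irreducible or a sum $D_1+D_2$ of two reduced irreducible curves each meeting $N$ in $1$; the second possibility is excluded by the case (b) analysis (no curve has $N\cdot D_i=1$), and for the first I list the curves with $N\cdot D=2$, which turn out to be among $H_e$, $F_e-E_i$, $E_i$, all of which have negative self-intersection, contradicting $D^2=0$.

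The main obstacle, and the part requiring the most care, is the case analysis for curves $D$ with small $N\cdot D$ — specifically making sure the list of reduced irreducible curves with $\alpha\in\{0,1\}$ is complete and correctly uses the very-general hypothesis (at most one point on a fiber, $C_e$ avoids all points, and $h^0(C_e+ef)=e+2$ to bound how many points lie on a curve in $|H_e+eF_e|$). The delicate subcase is $\alpha=1,\beta=e$ with $D=H_e+eF_e-E_{i_1}-\dots-E_{i_s}$, $s\le e+1$, where I need \eqref{V1C3} in the strengthened form $b+e+2>\sum(m_i+1)+2$ to conclude $N\cdot D>2$ rather than merely $N\cdot D>1$; similarly $\alpha=1,\beta=0$ needs the ``$+1$'' in \eqref{V1C2}. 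Everything else is routine once the ampleness of $N$ and $N^2\ge 10$ are in hand, since those reduce the problem to the same bounded search already carried out for global generation.
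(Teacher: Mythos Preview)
Your proposal is correct and follows essentially the same approach as the paper: define $N$ with $L=K_{\Fer}+N$, use Theorem~\ref{AmpleTh1} for ampleness, verify $N^2\ge 10$, and then run the same case analysis on reduced irreducible curves to exclude the Reider obstructions, with the curves achieving $N\cdot D=2$ being precisely $H_e$, $F_e-E_i$, $E_i$, all of negative self-intersection. The paper's write-up is virtually identical to what you outline, including the reference back to the global-generation case analysis.
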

	
	\begin{theorem}
		Suppose $e>0$. Let $\pi : \Fer \to \Fe$ be the blow up of $\Fe$ at very general points $p_1,p_2,\dots,p_r$. Let $\lambda\ge e$ be the least integer such that $2\lambda -e+2>r$. Also let $k_i=2i-e+1$ for all $e\le i \le \lambda$. Let $L=aH_e+bF_e-m_1E_1-m_2E_2-\dots-m_rE_r$ be a line bundle on $\Fer$ such that:
		\begin{enumerate}
			\item \label{V2C1} $a+2>m_i+2>2$ ~$\text{for } 1\le i\le r,$
			\item\label{V2C2} $b+e+2+(i-e)(a+2)>\displaystyle \sum_{\substack{\text{any distinct }}\\k_i \text{ of } m_j}(m_j+1)+2$ ~$\text{for } e\le i\le \lambda,$
			\item\label{V2C3} $b+e+2>(a+2)\lambda  + 1.$
		\end{enumerate}
		Then $L$ is very ample.
		\begin{proof}
			Let $N=(a+2)H_e+(b+e+2)F_e-(m_1+1)E_1-\dots-(m_r+1)E_r$ be a line bundle on $\Fer$. Then by the assumption on $L$ and Theorem \ref{AmpleTh2}, $N$ is ample. Note that $L=K_{\Fer}+N$. Note that $N^2\ge 10$ using the assumptions.
			
			Now by Reider's criterion, if $L$ fails to be very ample, then there exists an effective divisor $D$ such that one of the following holds:
			\begin{enumerate}
				\item[(a)] $N\cdot D=1$, $D^2=0$ or $-1$,
				\item[(b)] $N\cdot D=2$, $D^2=0$.
			\end{enumerate}
			As $N$ is ample, if $N\cdot D=1$ then $D$ is an irreducible reduced. If $N\cdot D=2$ then $D$ is either irreducible reduced or $D$ is a sum of two reduced irreducible curves, each of which has intersection 1 with $N$. Write $D= \alpha H_e + \beta F_e - \sum_{i=1}^r n_iE_i$.
			As before, we inquire the following cases assuming $D$ is reduced irreducible:\\
			
			\noindent \textbf{Case (i):} $\alpha=\beta=0$\\
			Here as $D$ is irreducible reduced, we have $D=E_i$ for some $i$ and $N \cdot D=m_i+1>1.$\\
            
		\noindent	\textbf{Case (ii):} $\alpha=0$ and $\beta=1$\\
			Here $D=F_e$ or $F_e-E_i$ for some $i$. If $D=F_e$, we have $N\cdot D=(a+2)>2$. For $D=F_e-E_i$, we get $N\cdot D=(a+2)-(m_i+1)>1.$\\

        \noindent   Henceforth, we can assume $\alpha \neq 0$.\\
			\noindent\textbf{Case (iii):} $\alpha=1$ and $\beta=0$\\
			Here $D=H_e$. Then using \eqref{V2C3}, we have $N\cdot D=b+e+2-(a+2)e>1.$\\
			
			\noindent \textbf{Case (iv):} $\alpha=1$ and $e \le \beta \le \lambda$\\
			Here, $D=H_e+\beta F_e-E_{i_1}-\dots - E_{i_s}$. Since the blown up points are very general, we have $s<2\beta - e+2$.
		As in the proof of Theorem \ref{GGTh2} Case (iv), using \eqref{V2C2}, we get that $N\cdot D>2$.\\

		\noindent	\textbf{Case (v):} $\alpha>1$ or $\beta >\lambda$\\
			Then, by Lemma \ref{multi} we have $\beta+(\lambda-e)\alpha\ge \sum_{i=1}^rn_i$. Hence, using the conditions $\eqref{V2C1}$ and $\eqref{V2C2}$, we get
			\begin{equation}\label{Eqn-4}
				(a+2)(\beta+(\lambda-e)\alpha)> \sum_{i=1}^r(m_i+1)n_i.
			\end{equation}
            Then, we have
			\begin{equation*}
				\begin{split}
					N\cdot D&=\alpha(b+e+2)+\beta(a+2)-\alpha(a+2)e-\sum_{i=1}^r(m_i+1)n_i\\
					&=(a+2)(\beta+(\lambda-e)\alpha)-(a+2)\alpha\lambda+\alpha(b+e+2)-\sum_{i=1}^r(m_i+1)n_i\\
					&> \alpha(b+e+2-(a+2)\lambda) \text{ (using \eqref{Eqn-4})}\\
					&\ge 2 \text{ (by \eqref{V2C3})}.
				\end{split}
			\end{equation*}
	Note that there is no irreducible reduced curve $D$ with $N \cdot D=1$. 
    Also, we have seen that the only reduced irreducible curves with $N \cdot D=2$ are $H_e$, $F_e-E_i$ and $E_i$ for some $i$. But these curves have self intersection at most $-1$. Hence, using Reider's criterion, we conclude that $L$ is very ample.
			\end{proof}
	\end{theorem}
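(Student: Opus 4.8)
The plan is to realize $L$ as the adjoint bundle $K_{\Fer}+N$ of an ample line bundle $N$ and then invoke Reider's criterion, exactly as in the global generation argument of Theorem~\ref{GGTh2} but now with enough slack to land in the very ample range. Concretely, I would set
$$N=(a+2)H_e+(b+e+2)F_e-(m_1+1)E_1-\dots-(m_r+1)E_r,$$
so that $L=K_{\Fer}+N$. Substituting $(a+2,\,b+e+2,\,m_i+1)$ for $(a,b,m_i)$ in Theorem~\ref{AmpleTh2}, conditions \eqref{V2C1}, \eqref{V2C2}, \eqref{V2C3} imply (in fact strengthen, because of the larger constant terms) the hypotheses \eqref{A2C1}, \eqref{A2C2}, \eqref{A2C3} for $N$, so $N$ is ample. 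Expanding $N^2=2(a+2)(b+e+2)-(a+2)^2e-\sum_{i=1}^r(m_i+1)^2$ and running the same bookkeeping as in the proof of Theorem~\ref{GGTh2} --- where the analogous computation produced $N^2\ge r+6$ --- but now using the uniformly stronger estimates coming from \eqref{V2C1}--\eqref{V2C3} (notably $a\ge m_i+1$), one checks that $N^2\ge 10$.

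With $N$ ample and $N^2\ge 10$, the very ample half of Reider's criterion (Theorem~\ref{Reider's}) says that if $L=K_{\Fer}+N$ were not very ample there would exist an effective divisor $D$ with $N\cdot D=0$ and $D^2\in\{-1,-2\}$, or $N\cdot D=1$ and $D^2\in\{0,-1\}$, or $N\cdot D=2$ and $D^2=0$. The first is impossible because $N$ is ample. In the remaining two, ampleness of $N$ forces $D$ to be reduced and irreducible when $N\cdot D=1$, and when $N\cdot D=2$ either reduced irreducible or a sum $D_1+D_2$ of two reduced irreducible curves with $N\cdot D_1=N\cdot D_2=1$. Thus it suffices to prove that every reduced irreducible curve $D$ on $\Fer$ satisfies $N\cdot D\ge 2$, with equality only when $D\in\{E_i,\ F_e-E_i,\ H_e\}$, together with the remark that each of those curves has self-intersection $\le -1$. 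This at once excludes every ``$N\cdot D=1$'' divisor, restricts the $N\cdot D=2$ case to curves with $D^2\le-1\ne0$ (contradicting $D^2=0$), and kills the decomposable subcase since no component can meet $N$ in degree $1$.

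To establish that bound I would write $D=\alpha H_e+\beta F_e-\sum_i n_iE_i$ and run through the possible pairs $(\alpha,\beta)$ via \cite[Cor.~V.2.18]{Har}, as in Theorem~\ref{GGTh2}: the cases $(\alpha,\beta)=(0,0)$, $(0,1)$, $(1,0)$ give $D\in\{E_i,\,F_e,\,F_e-E_i,\,H_e\}$, with $N\cdot D$ equal to $m_i+1$, $a+2$, $a-m_i+1$, or $b+e+2-(a+2)e$, each $\ge 2$ (or $>1$) by \eqref{V2C1}--\eqref{V2C3}; the case $\alpha=1,\ e\le\beta\le\lambda$ gives $D=H_e+\beta F_e-E_{i_1}-\dots-E_{i_s}$, where the very-general position of the points together with $h^0(\Fe,C_e+\beta f)=2\beta-e+2$ (Proposition~\ref{dim}) forces $s\le k_\beta$, so that $N\cdot D=b+e+2+(\beta-e)(a+2)-\sum_{j}(m_{i_j}+1)>2$ by \eqref{V2C2}; and the case $\alpha>1$ or $\beta>\lambda$, where Lemma~\ref{multi} gives $\beta+(\lambda-e)\alpha\ge\sum_i n_i$, hence $(a+2)\big(\beta+(\lambda-e)\alpha\big)>\sum_i(m_i+1)n_i$ by \eqref{V2C1}, and rearranging gives $N\cdot D>\alpha\big(b+e+2-(a+2)\lambda\big)\ge 2$ by \eqref{V2C3}. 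I expect the two places that need genuine care to be the numerical estimate $N^2\ge 10$ and the case $\alpha=1,\ e\le\beta\le\lambda$: the latter is where one must combine the dimension count of Proposition~\ref{dim} with the very-general hypothesis to extract $s\le k_\beta$, and it is precisely the extra ``$+2$'' in \eqref{V2C2} (against the ``$+1$'' of the global generation analogue \eqref{G2C2}) that upgrades the conclusion there from $N\cdot D>1$ to $N\cdot D>2$; the rest is routine arithmetic.
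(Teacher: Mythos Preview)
Your proposal is correct and follows essentially the same approach as the paper: set $N=L-K_{\Fer}$, show it is ample via Theorem~\ref{AmpleTh2} and that $N^2\ge 10$, then run the same case analysis on reduced irreducible $D$ (using Proposition~\ref{dim} for $\alpha=1,\ e\le\beta\le\lambda$ and Lemma~\ref{multi} for $\alpha>1$ or $\beta>\lambda$) to conclude that $N\cdot D\ge 2$ with equality only for $E_i$, $F_e-E_i$, or $H_e$, each of self-intersection $\le -1$. Your handling of the decomposable $N\cdot D=2$ case and your identification of the two delicate points match the paper's treatment.
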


	\begin{lemma}\label{generelised Krishna Lemma}
		Let $r\ge4, m_1,\dots,m_r,t\ge 1$ and $n_1,\dots,n_r>0$. Suppose that $n_i\ge2$ for some $1\le i \le r$ and $\sum_{i=1}^r m_i^2\ge t$. Let $n=\min\{m_i|1\le i \le r\}$. Then, the following inequality holds
		\[
		\left(\frac{r+3}{r+2}+t\right)\left(\sum_{i=1}^rm_i^2\right)\left(\sum_{i=1}^rn_i^2-n\right)>\left(\sum_{i=1}^r(m_in_i)+t\right)^2.
		\]
	\end{lemma}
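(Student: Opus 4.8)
The strategy is to reduce the statement to elementary inequalities among the three quantities
\[
S:=\sum_{i=1}^r m_i^2,\qquad T:=\sum_{i=1}^r n_i^2,\qquad P:=\sum_{i=1}^r m_in_i,
\]
which satisfy $0<P^2\le ST$ by the Cauchy--Schwarz inequality, and to rewrite the asserted estimate as $\bigl(\tfrac{r+3}{r+2}+t\bigr)\,S\,(T-n)>(P+t)^2$, with $n=\min_i n_i$. The hypotheses imply $S\ge r$ and $S\ge t$ (each $m_i\ge1$, and $\sum m_i^2\ge t$), and they also pin down a lower bound for $T$: since the $n_i$ are positive integers and $n_j\ge2$ for some $j$, we have $T\ge r+3$, while $T\ge rn^2$ always holds because every $n_i\ge n$. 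Treating the two cases $n=1$ and $n\ge2$ (the only ones, since $n\in\mathbb{Z}_{>0}$) we obtain $T\ge(r+3)\,n$ in all cases, i.e.
\[
T-n\ \ge\ \frac{r+2}{r+3}\,T .
\]

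Next I would split the coefficient $\tfrac{r+3}{r+2}+t$ and treat the two summands separately. The displayed bound immediately gives $\tfrac{r+3}{r+2}S(T-n)\ge ST\ge P^2$, so it suffices to prove the strict inequality $t\,S(T-n)>2tP+t^2$, that is,
\[
S(T-n)\ >\ 2P+t .
\]
Here AM--GM gives $2P\le 2\sqrt{ST}\le S+T$, so that $2P+t\le 2S+T$ (using $t\le S$); on the other hand $S(T-n)\ge\frac{r+2}{r+3}ST$ by the previous step, so it is enough to verify the purely numerical inequality $\frac{r+2}{r+3}ST>2S+T$ for all real $S\ge r$, $T\ge r+3$. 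Since $\frac{r+2}{r+3}ST-2S-T$ is increasing in each of $S$ and $T$ on this range for $r\ge4$, its minimum occurs at $(S,T)=(r,r+3)$ and equals $r^2-r-3\ge 9>0$. Therefore $S(T-n)\ge\frac{r+2}{r+3}ST>2S+T\ge 2P+t$, as required.

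Combining everything,
\[
\Bigl(\tfrac{r+3}{r+2}+t\Bigr)S(T-n)=\tfrac{r+3}{r+2}S(T-n)+t\,S(T-n)\ \ge\ P^2+t\,S(T-n)\ >\ P^2+2tP+t^2=(P+t)^2,
\]
which is the claim after substituting back $S=\sum m_i^2$, $T-n=\sum n_i^2-n$, and $P=\sum m_in_i$. The only step genuinely using the full hypotheses is the bound $T\ge(r+3)n$, where integrality of the $n_i$ and the presence of some $n_i\ge 2$ are both needed; I expect establishing that bound (through the case split $n=1$ versus $n\ge2$) to be the main, though minor, point to get right, the rest being routine applications of Cauchy--Schwarz, AM--GM, and a two-variable optimization.
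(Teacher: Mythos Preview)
Your argument is correct, reading $n=\min_i n_i$ (the paper's ``$n=\min\{m_i\}$'' is a typo, as both the proof and all applications make clear) and under your added hypothesis that the $n_i$ are positive integers. The overall skeleton is the same as the paper's: split the coefficient as $\tfrac{r+3}{r+2}+t$ and prove separately that $\tfrac{r+3}{r+2}\,S(T-n)\ge P^2$ and that $S(T-n)>2P+t$. The execution, however, is genuinely different. For the first inequality the paper quotes \cite[Lemma~2.3]{Han}, whereas you establish the stronger pointwise bound $T-n\ge\tfrac{r+2}{r+3}T$ (via $T\ge(r+3)n$, handled by the dichotomy $n=1$ or $n\ge 2$) and then just apply Cauchy--Schwarz; this makes your proof self-contained. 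For the second inequality the paper reduces to $S(T-n-1)>2P$ and splits into the cases $S>P$ and $S\le P$, while you avoid any case distinction by combining $2P+t\le 2S+T$ (AM--GM together with $t\le S$) with the two-variable minimisation of $\tfrac{r+2}{r+3}ST-2S-T$ on $S\ge r$, $T\ge r+3$.

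One point to flag: your key estimate $T\ge(r+3)n$ truly requires $n\in\{1\}\cup[2,\infty)$, i.e.\ integrality of the $n_i$; it fails for instance at $r=4$, $(n_1,n_2,n_3,n_4)=(1.1,1.1,1.1,2)$. The lemma as stated only assumes $n_i>0$, and the paper's case-based argument in fact goes through for real $n_i>0$. Since every application in the paper has integral $n_i$ (they are multiplicities of a curve at points), this is harmless for the intended use, but you should state the extra hypothesis explicitly.
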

	
	\begin{proof}
		By \cite[Lemma 2.3]{Han} we have,
		$$\left(\frac{r+3}{r+2}\right)\left(\sum_{i=1}^rm_i^2\right)\left(\sum_{i=1}^rn_i^2-n\right) \geq \left(\sum_{i=1}^rm_in_i\right)^2.$$
		Now to prove the desired inequality, it is enough to show that
		$$t\left(\sum_{i=1}^rm_i^2\right)\left(\sum_{i=1}^rn_i^2-n\right)>2t\sum_{i=1}^rm_in_i+t^2.$$
		Equivalently,
		$$\left(\sum_{i=1}^rm_i^2\right)\left(\sum_{i=1}^rn_i^2-n\right)>2\sum_{i=1}^rm_in_i+t.$$
		For the notational convenience, let $\displaystyle A=\sum_{i=1}^rm_i^2,B=\sum_{i=1}^rn_i^2 \text{ and }C=\sum_{i=1}^rm_in_i$. Thus, we need to show:
		$$AB-t>2C+An.$$
		By the assumption, we have $A\ge t$.
		So, $AB-t=A(B-1)+A-t\ge A(B-1)$. Now, it is enough to show that $A(B-1)=AB-A>2C+An$. Equivalently, we need to prove that $A(B-n-1)>2C$. Consider the following cases.\\
		\textbf{Case (i):} $A> C$\\
		Note that $B-n-1\ge 2$ as $r\ge2$ and $n_i\ge2$ for some $i$.\ Hence, the required inequality follows.\
		\textbf{Case (ii):} $A\le C$\\
		Note that, we have
		$$A(B-n-1)=AB-A(n+1)\ge C^2- C(n+1) \, (\text{as $AB\ge C^2$}). $$
		So it is enough to prove:
		$$ C^2- C(n+1)>2C.$$
		Equivalently,
		$$C>n+3\Leftrightarrow\sum_{i=1}^rm_in_i>n+3.$$
		This is true as $r\ge4$. Hence, the lemma follows.
	\end{proof}

	\begin{theorem}
		Suppose $e>0$. Let $\pi : \Fer \to \Fe$ be the blow up of $\Fe$ at very general points $p_1,p_2,\dots,p_r$, where $r\ge 4$. Let $L=aH_e+bF_e-m_1E_1-m_2E_2-\dots-m_rE_r$ be a line bundle on $\Fer$ such that:
		\begin{enumerate}
			\item \label{V3C1}$a+2>m_i+2>2$ ~~$\forall 1\le i\le r,$
			\item \label{V3C2}$b+e+2>(a+2)e+1,$
			\item \label{V3C3}$\displaystyle(b+e+2)^2-e[2(a+2)(b+e+2)-(a+2)^2e]\ge\left(\frac{3}{s+2}+2\right)\sum_{j=1}^s(m_{i_j}+1)^2~~~\forall~ 2\le s \le r,$  
			\item \label{V3C4}$\displaystyle 2(a+2)(b+e+2)-(a+2)^2e> \left(\frac{s+3}{s+2}+2\right)\sum_{i=1}^s(m_{i_j}+1)^2~~~\forall~ 2\le s \le r.$
		\end{enumerate}
        Then $L$ is {very ample}.
	\end{theorem}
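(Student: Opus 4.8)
The plan is to follow the template of Theorem~\ref{GG3} and of the two preceding very-ampleness results: exhibit an ample line bundle $N$ with $L=K_{\Fer}+N$, verify $N^2\ge 10$, and apply the $N^2\ge 10$ half of Reider's criterion (Theorem~\ref{Reider's}), using Theorem~\ref{AmpTh3} for the ampleness of $N$ and Lemma~\ref{generelised Krishna Lemma} for the key numerical estimate.

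First I would set $N=(a+2)H_e+(b+e+2)F_e-\sum_{i=1}^r(m_i+1)E_i$, so that $L=K_{\Fer}+N$ by the formula for $K_{\Fer}$. Under the substitution $a\mapsto a+2$, $b\mapsto b+e+2$, $m_i\mapsto m_i+1$, conditions \eqref{V3C1}--\eqref{V3C4} imply hypotheses strictly stronger than \eqref{A3C1}--\eqref{A3C4} (they even contain an extra ``$+2$'' of slack in \eqref{V3C3}--\eqref{V3C4}), so $N$ is ample by Theorem~\ref{AmpTh3}. Taking $s=r$ in \eqref{V3C4} then gives
\[
N^2=2(a+2)(b+e+2)-(a+2)^2e-\sum_{i=1}^r(m_i+1)^2>\Big(\tfrac{r+3}{r+2}+1\Big)\sum_{i=1}^r(m_i+1)^2\ge 8r\ge 32,
\]
using $m_i\ge 1$ from \eqref{V3C1} and $r\ge 4$; in particular $N^2\ge 10$.

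Now I would feed $L=K_{\Fer}+N$ into Theorem~\ref{Reider's}. If $L$ were not very ample there would be an effective divisor $D$ with $N\cdot D\in\{0,1,2\}$ and $D^2$ restricted as stated there; the value $N\cdot D=0$ is impossible since $N$ is ample, while $N\cdot D=1$ forces $D$ reduced irreducible and $N\cdot D=2$ forces $D$ reduced irreducible or a union $D_1+D_2$ of two reduced irreducible curves each meeting $N$ in degree one. So it is enough to show that $N\cdot D\ge 3$ for every reduced irreducible $D$ other than $E_i$, $H_e$, $F_e$, $F_e-E_i$, together with the observation that these four have $D^2\le -1$ except $F_e$, for which $F_e^2=0$ but $N\cdot F_e=a+2\ge 4$. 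The four exceptional curves are dispatched from \eqref{V3C1}--\eqref{V3C2}: $N\cdot E_i=m_i+1\ge 2$, $N\cdot(F_e-E_i)=a-m_i+1\ge 2$, $N\cdot H_e=b+e+2-(a+2)e\ge 2$. For any other reduced irreducible $D=\alpha H_e+\beta F_e-\sum n_iE_i$ one has, by \cite[Corollary V.2.18]{Har}, $\alpha\ge 1$ and $\beta\ge\alpha e\ge 1$, and $D$ is the strict transform of $\alpha C_e+\beta f$ with $n_i=\text{mult}_{p_i}$. Writing $A=a+2$, $B=b+e+2$, $M_i=m_i+1$, the Hodge-index identity \eqref{Eqn4} gives $\big(N\cdot D+\sum M_in_i\big)^2\ge(2AB-A^2e)(2\alpha\beta-\alpha^2e)$ and $N\cdot D+\sum M_in_i=A\beta+B\alpha-A\alpha e\ge B\alpha>0$, so it suffices to prove $(2AB-A^2e)(2\alpha\beta-\alpha^2e)>\big(\sum M_in_i+2\big)^2$. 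Here I would split exactly as in Theorem~\ref{AmpTh3}. If all $n_i\in\{0,1\}$, let $s$ be the number of $i$ with $n_i=1$; using \cite[Lemma 2.2]{HJNS} ($2\alpha\beta-\alpha^2e\ge s-1$) and \eqref{V3C3}--\eqref{V3C4} as in \eqref{Eqn-1}--\eqref{Eqn-2}, the identical cancellation now gives $(A\beta+B\alpha-A\alpha e)^2>3s\sum M_{i_j}^2\ge 3\big(\sum M_{i_j}\big)^2>\big(\sum M_{i_j}+2\big)^2$ whenever $\sum M_{i_j}\ge 4$, which holds for $s\ge 2$ since each $M_{i_j}\ge 2$; the case $s=1$ is immediate, as then $N\cdot D\ge B-M_{i_1}=b+e+1-m_{i_1}\ge 3$. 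If instead some $n_i\ge 2$, let $s$ be the number of the $p_i$ on $D$ and $n=\min\{n_i:n_i\ne 0\}$; then \cite[Lemma 2.2]{HJNS} ($2\alpha\beta-\alpha^2e\ge\sum n_i^2-n$), \eqref{V3C4} for those $s$ indices, and Lemma~\ref{generelised Krishna Lemma} with $t=2$ applied to the $M_{i},n_{i}$ over those $s$ points combine to give $(A\beta+B\alpha-A\alpha e)^2>\big(\sum M_in_i+2\big)^2$, hence $N\cdot D\ge 3$.

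The hard part is that Lemma~\ref{generelised Krishna Lemma} needs at least four indices, so in the subcase ``some $n_i\ge 2$'' I would still have to handle a reduced irreducible curve passing through only $s\le 3$ of the $p_i$. There I would use that the arithmetic-genus bound $p_a(\alpha C_e+\beta f)\ge\binom{n_i}{2}$ forces $\alpha\ge 2$ (since $\alpha=1$ makes $p_a=0$, hence all $n_i\le 1$); for $s\in\{2,3\}$ the required inequality $\big(\tfrac{s+3}{s+2}+2\big)\big(\sum M_{i_j}^2\big)\big(\sum n_{i_j}^2-n\big)\ge\big(\sum M_{i_j}n_{i_j}+2\big)^2$ is then a one-line Cauchy--Schwarz check using $M_{i_j}\ge 2$; and for $s=1$ one uses $2AB-A^2e=N^2+\sum M_i^2\ge 32+\sum M_i^2$ together with $2AB-A^2e>3\sum M_i^2$ from \eqref{V3C4}, one or the other of which already exceeds $(M_1n_1+2)^2/(n_1(n_1-1))\le 2(M_1+1)^2$. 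Beyond this bookkeeping and beyond tracking how the ``$+2$'' slack of \eqref{V3C3}--\eqref{V3C4} survives the cancellation in the multiplicity-free case, the argument is completely parallel to Theorem~\ref{GG3} and to the two preceding very-ampleness theorems.
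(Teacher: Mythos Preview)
Your proposal is correct and follows the paper's own route almost verbatim: set $N=(a+2)H_e+(b+e+2)F_e-\sum(m_i+1)E_i$, deduce ampleness of $N$ from Theorem~\ref{AmpTh3}, check $N^2\ge 10$, and then eliminate the Reider obstructions by the same case analysis---the curves $E_i$, $H_e$, $F_e$, $F_e-E_i$ directly from \eqref{V3C1}--\eqref{V3C2}; the multiplicity-$\le 1$ curves via the identity of \eqref{Eqn-1}--\eqref{Eqn-2}, which with the extra ``$+2$'' in \eqref{V3C3}--\eqref{V3C4} collapses to $(A\beta+B\alpha-A\alpha e)^2>3s\sum M_{i_j}^2$; and the curves with some $n_i\ge 2$ via Lemma~\ref{generelised Krishna Lemma} with $t=2$. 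This is exactly the paper's Cases~(i)--(v).

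The one place where you diverge is a point of rigor rather than strategy: you observe that Lemma~\ref{generelised Krishna Lemma} is stated for at least four indices, so in the ``some $n_i\ge 2$'' branch you separately dispose of curves meeting at most three of the $p_i$, whereas the paper's Case~(i) invokes the lemma without isolating this subcase. Your handling of $s=1$ (via $2AB-A^2e>3\sum M_i^2$ and the bound $(M_1n_1+2)^2/(n_1(n_1-1))\le 2(M_1+1)^2$) and of $s\in\{2,3\}$ (Cauchy--Schwarz with $M_{i_j}\ge 2$) is correct, so on this point your write-up is actually more complete than the paper's.
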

	\begin{proof}
		Let $N=(a+2)H_e+(b+e+2)F_e-(m_1+1)E_1-\dots-(m_r+1)E_r$ be a line bundle on $\Fer$. Then, by the assumption on $L$ and by Theorem \ref{AmpTh3}, $N$ is ample. Note that $L=K_{\Fer}+N$.
		\begin{equation}
			\begin{split}
				N^2&=2(a+2)(b+e+2)-(a+2)^2e-\sum_{i=1}^r(m_i+1)^2\\
				&>\left(\frac{s+3}{s+2}+2\right)\sum_{i=1}^r(m_i+1)^2\\
				&=\left(\frac{1}{s+2}+{3}\right)\sum_{i=1}^r(m_i+1)^2\ge10 \text{ (since, $m_i\ge 1$)}.
			\end{split}
		\end{equation}
		Suppose that $L$ is not  very ample, then by Reider's criterion there exists an effective divisor $D$ such that one of the following holds:
		\begin{enumerate}
			\item[(a)] $N\cdot D=1$, $D^2=0$ or $-1$,
			\item[(b)] $N\cdot D=2$, $D^2=0$.
		\end{enumerate}
		As $N$ is ample, if $N\cdot D=1$ then $D$ is an irreducible reduced curve. If $N\cdot D=2$ then $D$ is either irreducible reduced or $D$ is a sum of two irreducible reduced curves with intersection 1 with $N$. Write $D=\alpha H_e+\beta F_e-n_1E_1-\dots-n_rE_r$. \\
		
		\noindent \textbf{Case (i):} $n_i\ge 2$ for some $1\le i \le r$\\
		Let $n=\min\{n_i|n_i\neq 0,1\le i \le r\}$ and $s$ denote the cardinality of the set $\{n_i|n_i\neq 0,1\le i \le r\}$. Then by \cite[Lemma 2.2]{HJNS}, we get
		\begin{equation*}
		    2\alpha\beta-\alpha^2e\ge \sum_{i=1}^rn_i^2-n.
		\end{equation*}
		Thus, we have
		\begin{equation*}
			\begin{split}
				((a+2)\beta+(b+e+2)\alpha-(a+2)\alpha e)^2 &\ge (2(a+2)(b+e+2)-(a+2)^2e)(2\alpha\beta-\alpha^2e) \text{ (by \eqref{Eqn4})}\\
				&\ge\left(\left(\frac{s+3}{s+2}+2\right)\sum_{i=1}^s(m_i+1)^2\right)\left(\sum_{i=1}^rn_i^2-n\right) \text{ (by $(4)$)}\\
				&>\left(\sum_{i=1}^r((m_i+1)n_i)+2\right)^2 \text{ by Lemma \ref{generelised Krishna Lemma} }.
			\end{split}
		\end{equation*}
		Hence, in this case, we have $L\cdot D>2$.\\
		
		\noindent \textbf{Case (ii):} $D=\alpha H_e+\beta F_e-E_{i_1}-\dots-E_{i_s}$ with $s>1,\alpha \neq 0 ,\beta\neq 0$\\
		$$N\cdot D=(a+2)\beta+\alpha(b+e+2)-(a+2)\alpha e-\sum_{j=1}^s(m_{i_j}+1).$$
		
		With the similar calculations in \eqref{Eqn-1} \eqref{Eqn-2}, we can see that
		\begin{equation*}
			\begin{split}
				((a+2)\beta+(b+e+2)\alpha&-(a+2)\alpha e)^2\\&> (s-1)\left(\frac{s+3}{s+2}+2\right)\sum_{i=1}^s(m_i+1)^2+(\alpha (b+e+2)-(a+2)\beta)^2\\
				&=(s-1)\left(\frac{s+3}{s+2}+2\right)\sum_{i=j}^s(m_{i_j}+1)^2+\left(\frac{3}{s+2}+2\right)\sum_{j=1}(m_{i_j}+1)^2\\
				&=3s\sum_{i=j}^s(m_{i_j}+1)^2\\
				&\ge 3\left(\sum_{i=j}^s(m_{i_j}+1)\right)^2 \text{ (by Cauchy-Schwarz's inequality and $s>1$).}\\
				&> \left(\sum_{i=j}^s(m_{i_j}+1)+2\right)^2.
			\end{split}
		\end{equation*}
		Thus we have $N\cdot D>2$.\\
		
		\noindent\textbf{Case (iii):} $D=\alpha H_e+\beta F_e-E_i$ for some $1\le i\le r$ with $\alpha, \beta \neq 0$\\
		Note that $a+2\ge m_i+3$. Then
		\begin{equation*}
			\begin{split}
				N\cdot D&=(a+2)\beta+(b+e+2)\alpha-(a+2)\alpha e-(m_i+1)\\
				&=((a+2)\beta-m_i-1)+(b+e+2-(a+2)e)\alpha\\
				&>1+1=2 \text{ (by \eqref{V3C1} and \eqref{V3C2})}.
			\end{split}
		\end{equation*}

		\noindent \textbf{Case (iv):} $\beta=0, \alpha \neq 0$:\\
		 As the blown up points are very general, we have $D=H_e$. Therefore, $$N\cdot D=b+e+2-(a+2)e\ge2.$$ But even if $N\cdot D=2$, we have $D^2=-e<0$.\\
		 
		\noindent\textbf{Case (v):} $\alpha =0, \beta=0$\\
		 Then $D=F_e$ or $F_e-E_i$ or $E_i$ for some $1\le i \le r.$ Now, we have
		\begin{itemize}
			\item $D=F_e$, $N\cdot D= a+2>2,$
			\item  $D=F_e-E_i$, $N\cdot D= a+2-(m_i+1)\ge2,$
			\item $D=E_i$, $N\cdot D= m_i+1\ge 2$.
		\end{itemize}
		So, $N\cdot D=2$ only if $D=F_e-E_i$ or $E_i$ for some $1\le i\le r$. But both curves are of self-intersection $-1$. Hence, $L$ is very ample.
	\end{proof}

	\section{$k$-Very Ampleness}
	
                 
                 In this section, we provide conditions that are sufficient to determine whether a line bundle is k-very ample. The main tool used to establish these numerical criteria is Theorem~\ref{BFS}.
	             
		\begin{theorem}\label{kveryThm1}
			Suppose $e>0$. Let $\pi : \Fer \to \Fe$ be the blow up of $\Fe$ at very general points $p_1,p_2,\dots,p_r$. Let $L=aH_e+bF_e-m_1E_1-m_2E_2-\dots-m_rE_r$ be a line bundle on $\Fer$ such that for any $k>0$ the following holds:
			\begin{enumerate}
				\item \label{K1C1}$a+2>m_i+2k>3k-1$ ~~$\forall 1\le i\le r,$
				\item \label{K1C2}$b+e+2>(a+2)e+2k+1,$
				\item \label{K1C3}$b+e+2\ge\displaystyle \sum_{i=1}^{r}(m_i+1)+2k.$
			\end{enumerate}
			Then $L$ is $k$-very ample.
			\begin{proof}
				Let $N=(a+2)H_e+(b+e+2)F_e-(m_1+1)E_1-\dots-(m_r+1)E_r$ be a line bundle on $\Fer$. Then by the assumption on $L$ and {Theorem \ref{AmpleTh1}}, $N$ is an ample ({$k>0$ is used here}). Note that $L=K_{\Fer}+N$.  Also, \eqref{K1C1} implies $a+2\ge m_i+2k+1\ge 3k+1$ and \eqref{K1C2} implies $b+e+2 > a+3$. Then, we get
				

                \[
				\begin{split}
					N^2&=2(a+2)(b+e+2)-(a+2)^2e-\sum_{i=1}^r(m_i+1)^2\\
					&=(a+2)(b+e+2)+(a+2)\left[(b+e+2)-(a+2)e\right]-\sum_{i=1}^r(m_i+1)^2\\
                    & \ge (a+2)(b+e+2)-\sum_{i=1}^r(m_i+1)^2 +(a+2) (2k+2) \text{ (using \eqref{K1C2})}\\
                    & \ge 2k (3k+1) +(3k+1)(2k+2) \text{ (using \eqref{K1C3})}\\
                    & \ge 4k+5.
				\end{split}
				\]
	
				Suppose that $L$ is not $k$-very ample. Then by Theorem \ref{BFS}, there exists an effective divisor $D$ such that:
				\begin{equation}\label{aaa1}
				    N\cdot D -k-1\le D^2\le\frac{N\cdot D}{2}<k+1.
				\end{equation}
				As $N$ is ample, we have
				$$-k\le D^2\le k \text{ and } 1\le N\cdot D \le 2k+1.$$
				
				Let $D=\alpha H_e+\beta F_e-n_1E_1-\dots-n_rE_r$. We will show that such a $D$ does not exist by considering the following cases:\\

\noindent \textbf{Case (i):} $\alpha=\beta=0$\\
				 Since $D$ is a reduced  irreducible curve, $D=E_i$ for some $i$. Then $N\cdot D=m_i+1\ge k+1$ (by \eqref{K1C1}). But as $D^2=-1$, from \eqref{aaa1}, we get $N \cdot D \le k$, which is a contradiction.\\

                 \noindent \textbf{Case (ii):} $\alpha=0$ and $\beta =1$\\
				 Then $D=F_e$ or $D=F_e-E_i$ for some $i$. If $D=F_e$ then $$N\cdot D=a+2>2k+1.$$
				 If $D=F_e-E_i$, then 
				 $$N\cdot D=a+2-(m_i+1)\ge 2k.$$

                \noindent  \textbf{Case (iii):} $\alpha=1$ and $\beta =0$\\
				 Then $D=H_e$, $$N\cdot D=b+e+2-(a+2){e}> (2k+1) \text{ (by \eqref{K1C2})}.$$\\

                \noindent \noindent \textbf{Case (iv):} $\alpha=1$ and $\beta=e$\\
				 Here $D=H_e+eF_e-E_{i_1}-\dots-E_{i_{s}}$, with $s\le e+1$ (as points are in very general position). Then$$N\cdot D= b+e+2-\sum_{i=1}^rn_i(m_i+1)= b+e+2-\sum_{j=1}^s(m_{i_j}+1)>2k+1.$$
                 
				\noindent \textbf{Case (v):} $\alpha>1$ or $\beta>\alpha e$
				 \begin{equation*}
				 	\begin{split}
				 		\alpha(b+e+2)+\beta(a+2)-\alpha (a+2)e&\geq \sum_{i=1}^r(m_i+1)n_i+ 2\alpha k  +(\beta-\alpha e)(a+2) \text{ (using \eqref{K1C3})}\\
				 		&\ge\sum_{i=1}^r(m_i+1)n_i+2k+2.
				 	\end{split}
				 \end{equation*}
				 Last inequality is because $\alpha>1$ or $\beta-\alpha e\ge 1$.\\
                 Hence, $N \cdot D \ge 2k+2$.

				 From the considerations above, we see that if $D$ is a curve with $N\cdot D\le 2k+1$ then $D$ has to be an irreducible and the possibilities are $F_e-E_i$ and $E_i$ for some $i$. Suppose that $D=F_e-E_i$. Then, if $N\cdot D=2k$, by Theorem \ref{BFS}, we get
				 $$k-1\le D^2=-1<\frac{N\cdot D}{2}=k.$$
				 This forces $k=0$. But we are not considering that case.
				 
				  Similarly, if $N\cdot D=2k+1$ by Theorem \ref{BFS}, we see that
				 $$k\le D^2=-1<\frac{N\cdot D}{2}=k+\frac{1}{2},$$
				which implies $k=-1$.
				 
				 Now if $D=E_i$ for some $i$. Again,  Theorem \ref{BFS} implies that
				 $$0=(k+1)-k-1\le N\cdot D-k-1\le D^2=-1<\frac{N\cdot D}{2}=k+\frac{1}{2},$$
				 which is not possible.
				 
				 Hence, $L$ is a $k$-very ample line bundle.

				\end{proof}
		\end{theorem}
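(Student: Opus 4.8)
The plan is to mirror the structure of the global generation and very ampleness proofs, replacing Reider's criterion with the Beltrametti--Francia--Sommese criterion (Theorem \ref{BFS}). First I would set $N = (a+2)H_e + (b+e+2)F_e - \sum_{i=1}^r (m_i+1)E_i$, so that $L = K_{\Fer} + N$; the hypotheses \eqref{K1C1}--\eqref{K1C3} are precisely the translates of \eqref{A1C1}--\eqref{A1C3} for $N$ (with $k>0$ guaranteeing the strict inequalities needed), so Theorem \ref{AmpleTh1} gives that $N$ is ample, hence nef. The next step is the numerical bound $N^2 \ge 4k+5$: using \eqref{K1C2} to rewrite $N^2 = (a+2)(b+e+2) + (a+2)\big[(b+e+2)-(a+2)e\big] - \sum (m_i+1)^2$ and then feeding in the consequences $a+2 \ge 3k+1$ and $b+e+2 \ge \sum(m_i+1) + 2k$ of \eqref{K1C1} and \eqref{K1C3}, one gets $N^2 \ge 2k(3k+1) + (3k+1)(2k+2) \ge 4k+5$.

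With $N^2 \ge 4k+5$ in place, suppose $L$ is not $k$-very ample. Theorem \ref{BFS} then furnishes an effective divisor $D$ with $N\cdot D - k - 1 \le D^2 \le \tfrac{N\cdot D}{2} < k+1$. Since $N$ is ample, the inequality $D^2 \le \tfrac{N\cdot D}{2} < k+1$ combined with $D^2 \ge N\cdot D - k - 1$ forces $1 \le N\cdot D \le 2k+1$ and $-k \le D^2 \le k$. The heart of the argument is to show no such $D$ exists. I would first reduce to $D$ irreducible and reduced (if $N\cdot D$ were realized by a reducible effective divisor, each irreducible component $C$ has $N\cdot C \ge 1$, and I would argue the self-intersection constraint cannot be met; more simply, I would handle the irreducible case and note at the end which irreducible curves could appear). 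Writing $D = \alpha H_e + \beta F_e - \sum n_i E_i$, I would run through the cases dictated by \cite[Corollary V.2.18]{Har}: $\alpha = \beta = 0$ (so $D = E_i$, giving $N\cdot D = m_i + 1 \ge k+1$ but $D^2 = -1$ forces $N\cdot D \le k$ from \eqref{aaa1}, contradiction); $\alpha = 0, \beta = 1$ (so $D = F_e$ or $F_e - E_i$, giving $N\cdot D \ge 2k$); $\alpha = 1, \beta = 0$ (so $D = H_e$, giving $N\cdot D = b+e+2-(a+2)e > 2k+1$ by \eqref{K1C2}); $\alpha = 1, \beta = e$ (so $D = H_e + eF_e - E_{i_1} - \cdots - E_{i_s}$ with $s \le e+1$ since the points are very general, giving $N\cdot D = b+e+2 - \sum(m_{i_j}+1) > 2k+1$ by \eqref{K1C3}); and $\alpha > 1$ or $\beta > \alpha e$, where using $\alpha \ge n_i$ (Bézout against $F_e - E_i$) and \eqref{K1C3} one gets $N\cdot D \ge \sum(m_i+1)n_i + 2k+2 \ge 2k+2$.

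The cleanup step is the key remaining obstacle: after the case analysis, the only irreducible curves $D$ with $N\cdot D \le 2k+1$ are $F_e - E_i$ and $E_i$, both with $D^2 = -1$. For these I would plug back into the sandwich inequality \eqref{aaa1}: $D = F_e - E_i$ with $N\cdot D = 2k$ gives $k-1 \le -1 < k$, forcing $k = 0$; with $N\cdot D = 2k+1$ gives $k \le -1 < k+\tfrac12$, impossible; and $D = E_i$ gives $0 \le N\cdot D - k - 1 \le -1$, impossible. Since we assumed $k > 0$ throughout, every case is excluded, so no such $D$ exists and $L$ is $k$-very ample. The only genuinely delicate point is making sure the reduction to irreducible $D$ is airtight — specifically that a reducible $D$ appearing in Theorem \ref{BFS} can always be replaced by an irreducible component still violating the numerical condition, or else that the listed irreducible curves are the only candidates and their self-intersections rule them out; this is exactly the bookkeeping done in the very ampleness proofs above and should go through verbatim with $2$ replaced by $2k+1$ and $-1$ replaced by $k$ in the relevant places.
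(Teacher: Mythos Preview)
Your proposal is correct and follows essentially the same approach as the paper's proof: the same definition of $N$, the same $N^2 \ge 4k+5$ computation, the same case analysis on irreducible curves, and the same endgame ruling out $E_i$ and $F_e-E_i$ via the sandwich inequality. The concern you flag about reducible $D$ is exactly the point the paper also leaves implicit; it is resolved in the same way you suggest, by noting that the irreducible components with small $N$-degree all have $D^2=-1$ and summing them still violates \eqref{aaa1}.
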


		\begin{theorem}
		Suppose $e>0$ . Let $\pi : \Fer \to \Fe$ be the blow up of $\Fe$ at very general points $p_1,p_2,\dots,p_r$. Let $\lambda\ge e$ be the least integer such that $2\lambda -e+2>r$. Also, let $k_i=2i-e+1$ for all $e\le i \le \lambda$. Let $L=aH_e+bF_e-m_1E_1-m_2E_2-\dots-m_rE_r$ be a line bundle on $\Fer$ such that for any $k>0$ the following holds:
		\begin{enumerate}
			\item \label{K2C1}$a+2>m_i+2k>3k-1$ ~$\text{for } 1\le i\le r,$
			\item \label{K2C2}$b+e+2+(i-e)(a+2)>\displaystyle  \sum_{\substack{\text{any distinct }\\k_i \text{ of } m_j}}(m_j+1)+2k+1$ ~$\text{for } e\le i\le \lambda$,
			\item \label{K2C3}$b+e+2>(a+2)(\lambda+1).$
		\end{enumerate}
		Then $L$ is a $k$-very ample line bundle.
        		\end{theorem}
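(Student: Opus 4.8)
The plan is to mirror the proof of Theorem \ref{kveryThm1}, with Theorem \ref{AmpleTh1} replaced by Theorem \ref{AmpleTh2} and Reider's criterion replaced by Theorem \ref{BFS}. Set
\[
N=(a+2)H_e+(b+e+2)F_e-(m_1+1)E_1-\dots-(m_r+1)E_r ,
\]
so that $L=K_{\Fer}+N$. The first step is to verify that the three hypotheses force $N$ to satisfy the hypotheses of Theorem \ref{AmpleTh2}: since $k\ge 1$, condition \eqref{K2C1} gives $a+2>m_i+2k\ge m_i+1>0$ (and also $m_i\ge k$, hence $a+2\ge 3k+1$); \eqref{K2C2} gives $(b+e+2)+(i-e)(a+2)>\sum(m_j+1)$ over any $k_i$ of the indices; and \eqref{K2C3} gives $b+e+2>(a+2)\lambda$. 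Hence $N$ is ample by Theorem \ref{AmpleTh2}.

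The second step is the numerical bound $N^2\ge 4k+5$ needed for Theorem \ref{BFS}. Using \eqref{K2C1} in the form $a+2>m_j+1$ together with \eqref{K2C2} for $i=\lambda$ (note that $k_\lambda=2\lambda-e+1\ge r$, so there the sum runs over all of $m_1,\dots,m_r$) one gets
\[
\sum_{j=1}^r (m_j+1)^2<(a+2)\bigl(b+e+2+(\lambda-e)(a+2)\bigr).
\]
Substituting this into $N^2=2(a+2)(b+e+2)-(a+2)^2e-\sum_{j}(m_j+1)^2$ gives $N^2>(a+2)\bigl[(b+e+2)-(a+2)\lambda\bigr]$, and then \eqref{K2C3} yields $N^2>(a+2)^2\ge(3k+1)^2\ge 4k+5$, the last step because $k\ge 1$.

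Now suppose $L$ is not $k$-very ample, so that Theorem \ref{BFS} produces an effective divisor $D$ with $N\cdot D-k-1\le D^2\le \frac{N\cdot D}{2}<k+1$; since $N$ is ample this forces $1\le N\cdot D\le 2k+1$. The key sub-claim is that every irreducible curve $C$ on $\Fer$ satisfies $N\cdot C\ge k+1$, with strict inequality $N\cdot C>2k+1$ unless $C\in\{E_i,\,F_e-E_i\}$ for some $i$. This is proved by the same case analysis as in the proof of Theorem \ref{AmpleTh2}, using \cite[Corollary V.2.18]{Har} and that the points are very general: one checks it for the curves $E_i$; for $H_e$ (the strict transform of $C_e$, which misses the very general points); for $F_e$ and $F_e-E_i$; for $H_e+\beta F_e-E_{i_1}-\dots-E_{i_s}$ with $e\le\beta\le\lambda$ (where $s\le k_\beta$ by general position and each multiplicity is $\le 1$ by B\'ezout against a fiber, so \eqref{K2C2} with $i=\beta$ applies); and finally for $\pi(C)=\alpha C_e+\beta f$ with $\alpha>1$ or $\beta>\lambda$ (so $\alpha\ge 1$, since an irreducible curve with $\alpha=0$ has $\beta\le 1\le\lambda$), where Lemma \ref{multi} with $n=\lambda$ gives $\beta+(\lambda-e)\alpha\ge\sum_i n_i$ and, combined with the sharp form $a+2\ge m_i+2k+1$ of \eqref{K2C1} and with \eqref{K2C3},
\[
N\cdot C \ge \alpha\bigl[(b+e+2)-(a+2)\lambda\bigr]+2k\sum_i n_i > \alpha(a+2) \ge 3k+1 > 2k+1 .
\]
Granting the sub-claim, every irreducible component of $D$ has intersection $\ge k+1$ with $N$, so $N\cdot D\le 2k+1<2(k+1)$ forces $D$ to be a single reduced irreducible curve, hence $D\in\{E_i,F_e-E_i\}$; but then $D^2=-1$ while $N\cdot D=m_i+1\ge k+1$ (respectively $N\cdot D=a+1-m_i\ge 2k$), so the inequality $N\cdot D-k-1\le D^2=-1$ cannot hold. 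Therefore no such $D$ exists and $L$ is $k$-very ample.

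I expect the main obstacle to be organizational rather than conceptual: correctly threading the clause ``any distinct $k_i$ of the $m_j$'' in \eqref{K2C2} (used with $i=\lambda$ in the $N^2$ estimate and with $i=\beta$ in the subcase $\alpha=1$, $e\le\beta\le\lambda$), handling the degenerate range $r<k_i$ where this sum is simply over all of $m_1,\dots,m_r$, and verifying in the two borderline subcases $D=E_i$ and $D=F_e-E_i$ that it is the full Beltrametti--Francia--Sommese inequality $N\cdot D-k-1\le D^2$, and not merely $N\cdot D\le 2k+1$, that produces the contradiction.
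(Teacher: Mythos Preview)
Your proposal is correct and follows essentially the same route as the paper: define $N$ so that $L=K_{\Fer}+N$, verify ampleness of $N$ via Theorem \ref{AmpleTh2}, establish $N^2>(a+2)^2\ge(3k+1)^2\ge 4k+5$, and then run the same five-case analysis on irreducible curves (the paper's Cases (i)--(v) match yours exactly), finishing with the Beltrametti--Francia--Sommese inequality to eliminate $E_i$ and $F_e-E_i$. Your $N^2$ estimate is organized a bit more directly than the paper's (you bound $\sum(m_j+1)^2$ in one stroke rather than expanding termwise), and your explicit reduction ``$N\cdot C\ge k+1$ for all irreducible $C$, hence $D$ is reduced irreducible'' makes transparent what the paper leaves to the phrase ``with the same arguments as in the last proof,'' but these are presentational rather than substantive differences.
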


		\begin{proof}
			Let $N=(a+2)H_e+(b+e+2)F_e-(m_1+1)E_1-\dots-(m_r+1)E_r$ be a line bundle on $\Fer$. Then by the assumption on $L$ and Theorem \ref{AmpleTh2}, $N$ is ample. Note that $L=K_{\Fer}+N$.
			Also, \eqref{K2C1} and \eqref{K2C2} implies that
			\begin{equation}\label{eqn5}
				\begin{split}
					(a+2)(b+e+2)&>(a+2)\left((e-\lambda)(a+2)+\sum_{i=1}^r(m_i+1)\right)\\
					&>(a+2)^2(e-\lambda)+\sum_{i=1}^r(m_i+1)(m_i+2k+1).
				\end{split}				
			\end{equation}
			Then, we have
			\[
			\begin{split}
				N^2&=2(a+2)(b+e+2)-(a+2)^2e-\sum_{i=1}^r(m_i+1)^2\\
				&=(a+2)(b+e+2)+(a+2)(b+e+2)-(a+2)^2e-\sum_{i=1}^r(m_i+1)^2\\
                & \ge (a+2)^2(\lambda +1) +(a+2)^2(e-\lambda)+\sum_{i=1}^r(m_i+1)(m_i+2k+1) \\
                & - (a+2)^2 e-\sum_{i=1}^r(m_i+1)^2 \text{ (by \eqref{eqn5} and \eqref{K2C3})}\\
				&\ge (a+2)^2\\
				&> 4k+5.
			\end{split}
			\]
			
			Using the similar strategy as in the above proof, suppose that $L$ is not $k$-very ample. Then, by Theorem \ref{BFS}, there exists an effective divisor $D$ such that:
			$$N\cdot D -k-1\le D^2<\frac{N\cdot D}{2}<k+1.$$
			As $N$ is ample, we have
			$$-k\le D^2\le k \text{ and } 1\le N\cdot D \le 2k+1.$$
			Let  $D=\alpha H_e+\beta F_e-n_1E_1-\dots-n_rE_r$ be a reduced irreducible curve. Then consider the following cases.\\

			\noindent \textbf{Case (i):} $\alpha=\beta=0$\\
			As $D$ is irreducible and reduced, we have $D=E_i$ for some $i$. Hence 
			$$N\cdot D=m_i+1\ge k+1 \text{ using \eqref{K2C1}.}$$

		\noindent	\textbf{Case (ii):} $\alpha=0$ and $\beta=1$\\
			Here $D=F_e$ or $F_e-E_i$ for some $i$. If $D=F_e$, we have $N\cdot D=(a+2)>2k+1$. For $D=F_e-E_i$, we get $D\cdot N=(a+2)-(m_i+1)\ge 2k.$\\
			
			\noindent \textbf{Case (iii):} $\alpha=1$ and $\beta=0$\\
			Here we get $D=H_e$. Then using \eqref{K2C3}, we can see that $N\cdot D=b+e+2-(a+2)e> 2k+1.$\\
			
			\noindent \textbf{Case (iv):} $\alpha=1$ and $\beta \le \lambda$\\
			So, $D=H_e+\beta F_e-E_{i_1}-\dots - E_{i_s}$. Since the blown up points are very general, we have $s<2\beta - e+2$. Then, we get
			\begin{equation*}
				\begin{split}
					N\cdot D&=(a+2)\beta+b+e+2-(a+2)e-\sum_{j=1}^{s}(m_{i_j}+1)\\
					&=b+e+2+(\beta-e)(a+2)-\sum_{j=1}^{s}(m_{i_j}+1)\\
					& >2k+1\text{ (by \eqref{K2C2})}.\\
				\end{split}
			\end{equation*}
		\noindent	\textbf{Case (v):} $\alpha>1$ or $\beta >\lambda$\\
			Then, by Lemma \ref{multi} we have $\beta+(\lambda-e)\alpha\ge \sum_{i=1}^rn_i$. Hence,
			\begin{equation}\label{Eqn-5}
				(a+2)(\beta+(\lambda-e)\alpha)\ge \sum_{i=1}^r(m_i+1)n_i.
			\end{equation}
            Now, we have
			\begin{equation*}
            \begin{split}
					N\cdot D&=\alpha(b+e+2)+\beta(a+2)-\alpha(a+2)e-\sum_{i=1}^r(m_i+1)n_i\\
					&=(a+2)(\beta+(\lambda-e)\alpha)-(a+2)\alpha\lambda+\alpha(b+e+2)-\sum_{i=1}^r(m_i+1)n_i\\
					&\ge \alpha(b+e+2-(a+2)\lambda) \text{ (using \eqref{Eqn-5})}\\
					&>a+2>2k+1 \text{ using \eqref{K2C3}}.
				\end{split}
			\end{equation*}
			With the same arguments as in the last proof, the line bundle $L$ is $k$-very ample.
		\end{proof}
		
%
	\begin{theorem}
		Suppose $e>0$. For $r\ge4$ let $\pi : \Fer \to \Fe$ be the blow up of $\Fe$ at very general points $p_1,p_2,\dots,p_r$. Let $L=aH_e+bF_e-m_1E_1-m_2E_2-\dots-m_rE_r$ be a line bundle on $\Fer$ such that for $k>0$:
		\begin{enumerate}
		\item \label{K3C1}$a+2>m_i+2k>3k-1$ ~~$\forall 1\le i\le r,$
		\item \label{K3C2}$b+e+2>(a+2)e+2k+1,$
				\item\label{K3C3} $\displaystyle(b+e+2)^2-(a+2)e[2(b+e+2)-ae]\ge\left(\frac{3}{s+2}+2k+1\right)\sum_{j=1}^s(m_{i_j}+1)^2~~~\forall~ 2\le s \le r,$
			\item \label{K3C4}$\displaystyle 2(a+2)(b+e+2)-(a+2)^2e \ge \left(\frac{s+3}{s+2}+2k+1\right)\sum_{i=1}^s(m_{i_j}+1)^2~~~\forall~ 2\le s \le r.$
		\end{enumerate}
        		Then $L$ is $k$-very ample.

	\end{theorem}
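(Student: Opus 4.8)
The plan is to follow the blueprint of the preceding very-ampleness and $k$-very-ampleness theorems. Put
\[
N=(a+2)H_e+(b+e+2)F_e-\sum_{i=1}^r(m_i+1)E_i ,
\]
so that $L=K_{\Fer}+N$. First I would check that, because $k\ge 1$, the hypotheses \eqref{K3C1}--\eqref{K3C4} imply (after discarding the extra additive terms $2k+1$, $2k$ and using $2(a+2)e^2\ge 0$ in \eqref{K3C3}) exactly the hypotheses of Theorem \ref{AmpTh3} applied to $N$, that is, to the data $(a+2,\ b+e+2,\ m_i+1)$ on the blow up of $\Fe$ at $r$ very general points with $r\ge 3$; hence $N$ is ample. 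The same input, namely \eqref{K3C4} with $s=r$ together with $m_i\ge k$ (forced by \eqref{K3C1}) and $r\ge 4$, gives
\[
N^2=2(a+2)(b+e+2)-(a+2)^2e-\sum_{i=1}^r(m_i+1)^2\ \ge\ \Big(\tfrac{1}{r+2}+2k+1\Big)\sum_{i=1}^r(m_i+1)^2\ \ge\ 4k+5 .
\]

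Next I would argue by contradiction using Theorem \ref{BFS}: if $L$ is not $k$-very ample there is an effective divisor $D$ with $N\cdot D-k-1\le D^2\le \tfrac{N\cdot D}{2}<k+1$, and ampleness of $N$ forces $N\cdot D\ge 1$, so $1\le N\cdot D\le 2k+1$ and $-k\le D^2\le k$. Writing $D=\alpha H_e+\beta F_e-\sum n_iE_i$ and first assuming $D$ reduced and irreducible, I would reproduce the case division of the $r\ge 4$ very-ampleness theorem, with the numerical inputs upgraded by the $2k+1$ terms appearing in \eqref{K3C3}--\eqref{K3C4}. The one substantive case is when $n_i\ge 2$ for some $i$: starting from $\big((a+2)\beta+(b+e+2)\alpha-(a+2)\alpha e\big)^2\ge\big(2(a+2)(b+e+2)-(a+2)^2e\big)(2\alpha\beta-\alpha^2e)$ as in \eqref{Eqn4}, combining it with $2\alpha\beta-\alpha^2e\ge\sum n_i^2-n$ from \cite[Lemma 2.2]{HJNS} and with \eqref{K3C4}, and finally invoking Lemma \ref{generelised Krishna Lemma} with $t=2k+1$ (whose hypotheses are all satisfied here: $r\ge 4$; $\sum(m_i+1)^2\ge 2k+1$, since $m_i\ge k$; and $n_i\ge 2$ for some $i$), one obtains $N\cdot D>2k+1$, a contradiction. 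In the remaining cases the estimates are identical in shape to those of the earlier proofs: for $D=\alpha H_e+\beta F_e-\sum_{j=1}^s E_{i_j}$ with $s>1$ and $\alpha,\beta\ne 0$ one uses \eqref{K3C3}, \eqref{K3C4} and the manipulations of \eqref{Eqn-1} and \eqref{Eqn-2}; for $D=\alpha H_e+\beta F_e-E_i$ with $\alpha,\beta\ne 0$, a direct computation using \eqref{K3C1} and \eqref{K3C2}; for $\beta=0$ one gets $D=H_e$ (the points being very general) with $N\cdot D=b+e+2-(a+2)e>2k+1$ by \eqref{K3C2}; and for $\alpha=0$ one gets $D\in\{F_e,\,F_e-E_i,\,E_i\}$. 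In every case $N\cdot D\ge 2k+2$, except $D=F_e-E_i$, where $N\cdot D=a+1-m_i\ge 2k$ by \eqref{K3C1}, and $D=E_i$, where $N\cdot D=m_i+1\ge k+1$.

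It then remains to exclude an effective $D$ supported on the curves $F_e-E_i$ and $E_i$. Since $N\cdot(F_e-E_i)\ge 2k$, $N\cdot E_i\ge k+1$, $N\cdot\big((F_e-E_i)+E_i\big)=a+2>2k+1$, and any coefficient $\ge 2$ or any two distinct such components would already push $N\cdot D$ to at least $2k+2$ once $k\ge 1$, the only surviving possibilities are $D=F_e-E_i$ or $D=E_i$ for a single $i$; both satisfy $D^2=-1$, so the left-hand inequality of Theorem \ref{BFS} gives $N\cdot D\le k$, contradicting $N\cdot D\ge k+1$. Hence no such $D$ exists, and $L$ is $k$-very ample. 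I expect the only genuinely delicate point to be the index bookkeeping in the case $n_i\ge 2$ --- making sure Lemma \ref{generelised Krishna Lemma} is invoked with the shifted multiplicities $m_i+1$ and with $t=2k+1$ in the correct roles --- everything else being a routine transcription of the preceding arguments.
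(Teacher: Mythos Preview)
Your proposal is correct and follows essentially the same approach as the paper: set $N=L-K_{\Fer}$, verify ampleness of $N$ via Theorem~\ref{AmpTh3}, bound $N^2$, and run the same case analysis on a putative Beltrametti--Francia--Sommese divisor $D$, invoking Lemma~\ref{generelised Krishna Lemma} with $t=2k+1$ in the case $n_i\ge 2$ and finishing the small cases $D\in\{F_e-E_i,E_i\}$ exactly as in Theorem~\ref{kveryThm1}. In fact your write-up is slightly cleaner than the paper's: you correctly cite Theorem~\ref{AmpTh3} (the paper's text says Theorem~\ref{AmpleTh2}), and you record $N^2\ge 4k+5$ as required by Theorem~\ref{BFS}, whereas the paper only displays $N^2\ge 4k+1$.
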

	\begin{proof}
		Let $N=(a+2)H_e+(b+e+2)F_e-(m_1+1)E_1-\dots-(m_r+1)E_r$ be a line bundle on $\Fer$. Then, by the assumption on $L$ and Theorem \ref{AmpleTh2}, $N$ is ample ($k>0$ is used here). Note that $L=K_{\Fer}+N$. Now,
		\begin{equation*}
			\begin{split}
				N^2&=2(a+2)(b+e+2)-(a+2)^2e-\sum_{i=1}^r(m_i+1)^2\\
				&\ge\left(\frac{s+3}{s+2}+2k\right)\sum_{i=1}^r(m_i+1)^2 \text{ by \eqref{K3C4}}\\
				&=\left(\frac{1}{s+2}+2k+1\right)\sum_{i=1}^r(m_i+1)^2\ge4k+1 \text{ (Since, $r\ge 4$)}.
			\end{split}
		\end{equation*}
		
		 Let $D=\alpha H_e+\beta F_e-n_1E_1-\dots-n_rE_r$ be an irreducible reduced curve on $\Fer$. To prove $N\cdot D>2k+1$ it is enough to show that
		$$\left[(a+2)\beta+(b+e+2)\alpha-(a+2)\alpha e\right]^2>\left(\sum_{i=1}^r(m_i+1)n_i+2k+1\right)^2.$$
		We consider the following cases.\\
        
		\noindent \textbf{Case (i):} $n_i\ge 2$ for some $1\le i \le r$\\
		Let $n=\min\{n_i|n_i\neq 0,1\le i \le r\}$ and $s$ denote the cardinality of the set $\{n_i|n_i\neq 0,1\le i \le r\}$. Then by \cite[Lemma 2.2]{HJNS}, we get
		$$2\alpha\beta-\alpha^2e\ge \sum_{i=1}^rn_i^2-n.$$
		Now consider, 
		\begin{equation*}
			\begin{split}
				((a+2)\beta+(b+e+2)\alpha&-(a+2)\alpha e)^2\\ &\ge (2(a+2)(b+e+2)-(a+2)^2e)(2\alpha\beta-\alpha^2e) \text{ (by \eqref{Eqn4})}\\
				&\ge\left(\left(\frac{s+3}{s+2}+2k+1\right)\sum_{i=1}^s(m_i+1)^2\right)\left(\sum_{i=1}^rn_i^2-n\right) \text{ (by \eqref{K3C4})}\\
				&>\left(\sum_{i=1}^r(m_i+1)n_i+2k+1\right)^2 \text{ by Lemma \ref{generelised Krishna Lemma}}.
			\end{split}
		\end{equation*}
		Hence, $N\cdot D>2k+1$.\\
		
		\noindent \textbf{Case (ii):} $D=\alpha H_e+\beta F_e-E_{i_1}-\dots-E_{i_s}$ with $s>1,\alpha \neq 0 \text{ and } \beta\neq 0$\\
        We have
		$$N\cdot D=(a+2)\beta+\alpha(b+e+2)-(a+2)\alpha e-\sum_{j=1}^s(m_{i_j}+1).$$

		With similar calculations as in \eqref{Eqn-1} and \eqref{Eqn-2}, we can see that
		\begin{equation*}\label{Eqn-3}
			\begin{split}
				((a+2)\beta+&(b+e+2)\alpha-(a+2)\alpha e)^2\\
                &\ge (s-1)\left(\frac{s+3}{s+2}+2k+1\right)\sum_{i=1}^s(m_i+1)^2+(\alpha (b+e+2)-(a+2)\beta)^2\\
				&=(s-1)\left(\frac{s+3}{s+2}+2k+1\right)\sum_{i=j}^s(m_{i_j}+1)^2+\left(\frac{3}{s+2}+2k+1\right)\sum_{j=1}(m_{i_j}+1)^2\\
				&=(2k+1)s\sum_{i=j}^s(m_{i_j}+1)^2\\
				&\ge  (2k+1)\left(\sum_{i=j}^s(m_{i_j}+1)\right)^2 \text{ (by Cauchy-Schwarz's inequality and $s>1$).}\\
				&> \left(\sum_{i=j}^s(m_{i_j}+1)+2k+1\right)^2.
			\end{split}
		\end{equation*}
		Thus we have $N\cdot D>2k+1$.\\

		\noindent\textbf{Case (iii):} $D=\alpha H_e+\beta F_e-E_i$ for some $1\le i\le r$ with $\alpha\neq 0$ and $\beta \neq 0$\\
		Note that $a+2\ge m_i+2k+1$. Now, we have
		$$N\cdot D=(a+2)\beta+(b+e+2)\alpha-(a+2)\alpha e-m_i-1=((a+2)\beta-m_i)+(b+e+2-(a+2)e)\alpha>2k+1.$$
		
		\noindent\textbf{Case (iv):} $\alpha=1, \beta=0$ \\
		Here $D=H_e$, so we get $$N\cdot D=b+e+2-(a+2)e>2k+1,$$
by \eqref{K3C2}.
        
		\noindent\textbf{Case (v):} $\alpha =0$ \\
        Then, we have  $D=F_e, F_e-E_i$ or $E_i$ for some $1\le i \le r$ . Note that, we have\\
        \begin{itemize}
            \item if $D=F_e$, $N\cdot D= a+2\ge2k+2,$
            \item  if $D=F_e-E_i$, $N\cdot D= a+2-(m_i+1)\ge 2k,$
            \item if $D=E_i$, $N\cdot D= m_i+1\ge k+1.$
        \end{itemize}
        
      \noindent  Hence, $L$ is $k$-very ample, using the similar argument as in the proof of the Theorem \ref{kveryThm1}.
		
	\end{proof}

\end{document}